\definecolor{cof}{RGB}{219,144,71}
\definecolor{pur}{RGB}{186,146,162}
\definecolor{greeo}{RGB}{91,173,69}
\definecolor{greet}{RGB}{52,111,72}
\newcolumntype{L}[1]{>{\raggedright\let\newline\\\arraybackslash\hspace{0pt}}m{#1}}
\newcolumntype{C}[1]{>{\centering\let\newline\\\arraybackslash\hspace{0pt}}m{#1}}
\newcolumntype{R}[1]{>{\raggedleft\let\newline\\\arraybackslash\hspace{0pt}}m{#1}}
\newtheorem{teo}{Theorem}[section]
\newtheorem{lem}[teo]{Lemma}
\newtheorem{pro}[teo]{Proposition}
\newtheorem{cor}[teo]{Corollary}
\newtheorem{defi}[teo]{Definition}
\newtheorem{Obs}[teo]{Observation}
\newtheorem*{notation}{Notation}
\newtheorem*{conj}{Conjecture}
\newtheorem*{Question}{Question}%per non contare le definizioni di torsore e BG nell'introduzione.
\newcommand{\rat}{\mathbb{Q}}
\newcommand{\nat}{\mathbb{N}}
\newcommand{\com}{\mathbb{C}}
\newcommand{\inte}{\mathbb{Z}}
\newcommand{\Ffield}{\mathbb{F}}
\newcommand{\Field}{\mathbb{K}}
\newcommand{\field}{\mathbf{k}}
\newcommand{\Zp}{\nicefrac{\inte}{p\inte}}
\newcommand{\Gl}[2]{\operatorname{GL}_{#1}\left(#2\right)}
\newcommand{\Ab}{\mathbf{Ab}}
\newcommand{\Var}[1]{\mathbf{Var_{#1}}}
\newcommand{\Stck}[1]{\mathbf{Stack_{#1}}}
\newcommand{\hei}[1]{H_{#1}}
\newcommand{\Hom}[3]{\operatorname{H}^{#1}\left(#2; #3\right)}
\newcommand{\Homred}[2]{\operatorname{H}^{#1}\left(#2\right)}
\newcommand{\grot}[1]{K_0(#1)}
\newcommand{\grotcom}[1]{\widehat{K_0}(#1)}
\newcommand{\Lo}[1]{L_0(#1)}
\newcommand{\Lclass}{\mathbb{L}}
\newcommand{\eke}[2]{\operatorname{e}_{#1}\left(#2\right)}
\newcommand{\B}[1]{\operatorname{\mathcal{B}} #1}
\newcommand{\cl}{\{\B{G}\}}
\title{Introduction to the Ekedahl Invariants}
\author{Ivan Martino}
\begin{document}
\maketitle

\begin{abstract}
In 2009 T. Ekedahl introduced certain cohomological invariants for finite groups. 
In this work we present these invariants and we give an equivalent definition that does not involve the notion of algebraic stacks.
Moreover we show certain properties for the class of the classifying stack of a finite group in the Kontsevich value ring.
\end{abstract}

%In this work we introduce the Ekedahl Invariants for a finite group.
%
In \cite{EkedahlStack}, Ekedahl studied whether the class of the classifying stack $\cl$ of a group $G$ equals the class of a point $\{*\}$ in the Grothendieck group of algebraic stacks.
All the known examples of finite groups when this does not happen are the counterexamples to the Noether problem:
Let $\Ffield$ be a field and consider the extension $\Ffield \subset \Ffield(x_g: g\in G)^G$, one wonders if this is rational (see \cite{Noether1917}), i.e. purely transcendental.
To show in which cases $\cl\neq \{*\}$, Ekedahl introduced in \cite{Ekedahl-inv} a new kind of geometric invariants for finite groups defined as the \emph{cohomology} for the class of classifying stack of $G$ in Kontsevich value ring of algebraic varieties.

Let $\field$ be an algebraically closed field of characteristic zero.
We denote by $\grot{\Var{\field}}$ the Grothendieck group of algebraic $\field$-varieties. 
Let $\mathbb{L}^i$ be the class of the affine space $\mathbb{A}^i_{\field}$ in $\grot{\Var{\field}}$ (so $\mathbb{L}^0=\{*\}$, the class of a point).
Let $\grotcom{\Var{\field}}$ be the Kontsevich value ring of algebraic $\field$-varieties (see Section \ref{sec-Preliminaries}).  
We note that in this ring $\mathbb{L}^i$ is invertible.

We denote by $\Lo{\Ab}$ the group generated by the isomorphism classes $\{G\}$ of finitely generated abelian groups $G$ under the relation $\{A\oplus B\}=\{A\}+\{B\}$.

\noindent
For every integer $k$, in \cite{EkedahlStack} Ekedahl defines a \textit{cohomological} map
\[
  \operatorname{H}^k: \grotcom{\Var{\field}}\rightarrow \Lo{\Ab}
\]
by assigning $\operatorname{H}^k(\nicefrac{\{X\}}{\mathbb{L}^m})=\{\Hom{k+2m}{X}{\mathbb{Z}}\}$ for every smooth and proper $\field$-variety $X$ (see Section \ref{sec-ekedahl-invariants}).
% 
% \noindent

Let $V$ be a $n$-dimensional faithful $\field$-representation of a finite group $G$ and let $G$ act component-wise on the product $V^m$. 
Thus consider the quotient $\nicefrac{V^m}{G}$.
In Proposition \ref{prop-property-BG} we show that $\cl$ is equal to $\lim_{m\rightarrow \infty} \{\nicefrac{V^{m}}{G}\} \Lclass^{-mn}$ in $\grotcom{\Var{\field}}$.
So one defines:
\setcounter{section}{3}\setcounter{teo}{1}
\begin{defi}
  For every integer $i$, the $i$-th \emph{Ekedahl invariant} $\eke{i}{G}$ of the group $G$ is $\operatorname{H}^{-i}(\{B{G}\})$ in $\Lo{\Ab}$.
  We say that the Ekedahl invariants of $G$ are \emph{trivial} if $\eke{i}{G}=0$ for $i\neq 0$.
\end{defi}

The purpose of this paper is to introduce the theory of the Ekedahl invariants to a reader who is not used to the notion of the algebraic stacks.
For this reason we also present some non-published results from \cite{Ekedahl-inv, EkedahlStack} aiming to a complete and self contained survey of the topic.
The author believes that one could work with Ekedahl invariants with basic knowledge of algebraic geometry and for this reason we present the following \emph{non-stacky} definition:

\setcounter{section}{4}\setcounter{teo}{1}
\begin{defi}
  Let $V$ be a $n$-dimensional faithful $\field$-representation of a finite group $G$ and let $X$ be a smooth and proper resolution of $\nicefrac{V^m}{G}$:
  \[
    X\xrightarrow{\,\,\,\,\,\,\,\pi\,\,\,\,\,\,\,} \nicefrac{V^m}{G}.
  \]
  
  \noindent
  There exists a positive integer $m=m(i, V)$, depending only by $i$ and $V$, so that the $i$-th Ekedahl invariant is defined as follows:
  \[
    \eke{i}{G}=\{\Hom{2m-i}{X}{\inte}\}+\sum_j n_j\{\Hom{2m-i}{X_j}{\inte}\} \in \Lo{\Ab},
  \]
  where $\{\nicefrac{V^m}{G}\}\in \grot{\Var{\field}}$ is written as the sum of classes of smooth and proper varieties $\{X\}$ and $\{X_j\}$, $\{\nicefrac{V^m}{G}\}=\{X\}+\sum_j n_j\{X_j\}$. 
\end{defi}

\noindent
In Proposition \ref{pro-coh-stabilizes}, we prove that such $m(i,V)$ always exists and, then, in Proposition \ref{pro-defi-equivalent} we show that the latter definition is well given and does not depend on the choice of the representation $V$ and of the resolution $X$.
Finally we prove that the two given definitions are equivalent.

We observe that the new definition does not involve the notion of algebraic stacks and for this reason could be appreciated by a more wider audience. 

We use the latter to prove the following theorem due to Ekedahl (see Theorem 5.1 in \cite{Ekedahl-inv}):
\setcounter{section}{4}\setcounter{teo}{3}
\begin{teo}
%   Let $G$ be a finite group. 
%   %
%   Let $\field$ be a field of characteristic zero.
  %
  We denote by $B_0(G)^{\vee}$ the pontryagin dual of the Bogomolov multiplier of the group $G$.
  If $G$ is a finite group, %and if $\field$ is an algebraically closed field of characteristic zero, 
  then
  \begin{description}
    \item[a)] $\eke{i}{G}=0$, for $i<0$;
    \item[b)] $\eke{0}{G}=\{\mathbb{Z}\}$;
    \item[c)] $\eke{1}{G}=0$;
    \item[d)] $\eke{2}{G}=\{B_0(G)^{\vee}\}+\alpha\{\inte\}$ for some integer $\alpha$.
%     \item[d)] $\eke{2}{G}=\{B_0(G)^{\vee}\}$, where $B_0(G)^{\vee}$ is the dual of the Bogomolov multiplier of the group $G$;
%     \item[e)] For $i>0$, $\eke{i}{G}$ is the sum (with signs) of classes of finite groups in $\Lo{\Ab}$.
  \end{description}
\end{teo}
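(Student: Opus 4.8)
The plan is to evaluate the formula of Definition 4.1 on a conveniently chosen smooth projective model and to lean on Proposition \ref{pro-defi-equivalent} to know that the answer is independent of this choice. Set $N=mn=\dim(V^m/G)$ and fix a smooth projective variety $X$ birational to $V^m/G$ (obtained by resolving and compactifying), so $\dim X=N$ and, in a decomposition $\{V^m/G\}=\{X\}+\sum_j n_j\{X_j\}$ into smooth proper pieces, every correction term $X_j$ has dimension strictly less than $N$. The whole computation then rests on three standard facts for a smooth projective variety $Y$ of dimension $d$ with $i$-th Betti number $b_i$: Poincar\'e duality $\Hom{2d-i}{Y}{\inte}\cong H_i(Y;\inte)$; the universal coefficient splitting $H_i(Y;\inte)\cong\inte^{b_i}\oplus\Hom{i+1}{Y}{\inte}_{\mathrm{tors}}$; and the freeness of the top group $\Hom{2d}{Y}{\inte}$. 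I would also record that $V^m/G$ is unirational, being dominated by the affine space $V^m$, so that $X$ is irreducible and simply connected; hence $\Hom{1}{X}{\inte}=0$ and $\Hom{2}{X}{\inte}$ is torsion-free.

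Parts (a), (b), (c) are then bookkeeping on degrees. For $i<0$ the degree $2N-i$ exceeds twice the dimension of every piece, so all groups $\Hom{2N-i}{\cdot}{\inte}$ vanish and $\eke{i}{G}=0$. For $i=0$ only the top piece contributes, since $\Hom{2N}{X_j}{\inte}=0$ when $\dim X_j<N$, while $\Hom{2N}{X}{\inte}=\inte$ because $X$ is connected; thus $\eke{0}{G}=\{\inte\}$. For $i=1$ the corrections again vanish (even the pieces of dimension $N-1$ have top degree $2N-2<2N-1$), and $\Hom{2N-1}{X}{\inte}\cong H_1(X;\inte)=0$ by simple connectivity, giving $\eke{1}{G}=0$.

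The substantive case is (d). By Poincar\'e duality and universal coefficients, $\Hom{2N-2}{X}{\inte}\cong H_2(X;\inte)\cong\inte^{b_2}\oplus\Hom{3}{X}{\inte}_{\mathrm{tors}}$, using that $\Hom{2}{X}{\inte}$ is torsion-free. The torsion group $\Hom{3}{X}{\inte}_{\mathrm{tors}}$ is the Artin--Mumford invariant: it is a birational invariant of smooth projective varieties, and for a variety birational to a quotient $V/G$ it is the dual of the unramified Brauer group of the function field. By Bogomolov's computation this unramified Brauer group is $B_0(G)$, whence $\Hom{3}{X}{\inte}_{\mathrm{tors}}\cong B_0(G)^{\vee}$; since $\field(V^m)^G$ is stably equivalent to $\field(V)^G$ (the extra copies of $V$ only adjoin rational variables), the same value is obtained for our $X$. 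Finally, the correction terms $X_j$ of dimension exactly $N-1$ contribute their top cohomology $\Hom{2N-2}{X_j}{\inte}$, which is free; together with the summand $\inte^{b_2}$ these assemble into a single term $\alpha\{\inte\}$ with $\alpha\in\inte$, no correction touching the torsion. This yields $\eke{2}{G}=\{B_0(G)^{\vee}\}+\alpha\{\inte\}$.

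I expect the main obstacle to be the identification $\Hom{3}{X}{\inte}_{\mathrm{tors}}\cong B_0(G)^{\vee}$: one must invoke the birational invariance of the torsion in $H^3$, verify that passing from $V$ to $V^m$ does not change the unramified Brauer group by a stable-equivalence argument, and then quote Bogomolov's explicit formula. The remaining delicate point is purely additive, namely checking that only free groups are contributed by the Betti number $b_2$ and by the lower-dimensional corrections, so that the finite group $B_0(G)^{\vee}$ survives untouched in $\Lo{\Ab}$ while everything else collapses into the single integer multiple $\alpha\{\inte\}$.
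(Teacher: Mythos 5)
Your proof is correct and follows essentially the same route as the paper's: evaluate the non-stacky definition on a Bittner decomposition $\{V^m/G\}=\{X\}+\sum_j n_j\{X_j\}$, handle a)--c) by dimension bookkeeping together with Serre's theorem that smooth proper unirational varieties are simply connected, and prove d) via Poincar\'e duality and the identification of the Artin--Mumford birational invariant $\operatorname{tor}\left(\Hom{3}{X}{\inte}\right)$ with $B_0(G)$ by Bogomolov. If anything, you are slightly more careful than the paper, which omits part a), glosses over the stable-equivalence step from $V$ to $V^m$, and does not spell out why the dimension-$(mn-1)$ correction terms contribute only free summands to the degree $2mn-2$ class.
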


Item \textbf{d)} is related to the Noether problem (see Section \ref{sec-Preliminaries}).
Ekedahl actually proved a stronger version of this result since he showed that $\eke{2}{G}=\{B_0(G)^{\vee}\}$.

In Section \ref{sec-Preliminaries}, after a brief historical introduction, we set all the basic definitions and notations.
In Section \ref{sec-classifying-stack} we discuss some properties of the class of the classifying stack $\cl$ and in the next section \ref{sec-ekedahl-invariants} we define the Ekedahl invariants as the \emph{cohomology} of the classifying stack.
In Section \ref{sec-nonstaky} we present the equivalent \emph{non-stacky} definition.
Finally, in Section \ref{sec-literature}, we use this definition to reprove partially Theorem 5.1 of \cite{Ekedahl-inv}.
In the end of the article we recall the state of the art of the Ekedahl invariants.

%Even if it looks complicate in all the studied cases this becomes easy to deal with (see for instance \cite{Martino-TheEkedahl}, ...)

\begin{notation}\em
In all this manuscript $\Ffield$ is a field and $\field$ is an algebraically closed field of characteristic zero. We set $*=\operatorname{Spec}(\field)$.
If $X$ is a scheme with a $G$-action, then we denote by $\nicefrac{X}{G}$ the schematic quotient and by $[\nicefrac{X}{G}]$ the stack quotient. 

% In all this manuscript, $G$ is a finite group and we work over an algebraically closed field $\field$ of characteristic zero.
% In Section \ref{sec-Preliminaries} and \ref{sec-classifying-stack} 
% we work over an algebraically closed field $\field$ of characteristic zero.
%
%
% Moreover every cohomology group (if not explicitly expressed differently) is the singular cohomology group with integer coefficient, that is $\Homred{k}{-}=\Hom{k}{-}{\inte}$.

In Sections \ref{sec-Preliminaries} and \ref{sec-classifying-stack}, $G$ is a linear group without any assumption of finiteness. In Sections \ref{sec-ekedahl-invariants} and \ref{sec-nonstaky}, instead, $G$ is always a finite group. 
\end{notation}

\setcounter{section}{0}\setcounter{teo}{0}
\section{Preliminaries}\label{sec-Preliminaries}
Let $\Ffield$ be a field and let $G$ be a finite group.
We denote by $\Ffield(x_g: g\in G)$ the field of rational functions with variables indexed by the elements of the group $G$.
The group acts on it via $h\cdot x_g=x_{hg}$.
We consider the field extension 
\begin{equation}\label{eq-extension}
  \Ffield \subset \Ffield(x_g: g\in G)^G, 
\end{equation}
where the latter denotes the field of invariants.
In 1914, Emmy Noether in \cite{Noether1917} wondered if the field extension (\ref{eq-extension}) is rational (i.e. purely transcendental).

\noindent
Mathematicians conjectured a positive answer to the Noether problem, until the breakthrough result of of Swan \cite{Swan1969} in 1969. 
He proved that the extension $\rat \subset \rat(x_g: g\in \Zp)^{\Zp}$ is not rational for $p=47, 113$ and $233$.

\noindent
After this, Saltman \cite{Saltman1984} proved that for any prime $p$ with $(\operatorname{char}\Ffield, p)=1$, there exists a group $G$ of order $p^9$ such that the Noether problem has negative answer.
He used a cohomological invariant introduced by Artin and Mumford in \cite{ArtinMumford1972}.
Bogomolov in \cite{Bogomolov1988}, showed a concrete way to compute this invariant that is now called Bogomolov multiplier $B_0(G)$:
\[
        B_0(G)=\bigcap_{A} \operatorname{Ker}\left(\Hom{2}{G}{\mathbb{C}^*}) \rightarrow \Hom{2}{A}{\mathbb{C}^*}\right),
\]
where the intersection runs over the abelian subgroups $A\subseteq G$.
This is a \emph{cohomological obstruction} to the rationality of (\ref{eq-extension}), i.e. the rationality of (\ref{eq-extension}) implies $B_0(G)=0$. 
Bogomolov also improved Saltman's statement from $p^9$ to $p^6$. 
%
%Other results in this direction are \cite{Chu-Hu-Kang-Kunyavskii2010, Chu-Kang2001, Moravec-Brauer}.

\noindent
Recently in Hoshi, Kang and Kunyavskii in \cite{HKM-noether}, classified all the $p^5$-groups with nontrivial Bogomolov multiplies.
%
%They show that $B_0(G)\neq 0$ if and only if $G$ belongs to the isoclinism family $\phi_{10}$. 

In the rest of this section we introduce some definitions which we need later.
%
%Let $\field$ be an algebraic closed field of characteristic zero.
%
The Grothendieck group $\grot{\Var{\field}}$ of varieties over $\field$ is the group generated by the isomorphism classes $\{X\}$ of algebraic $\field$-varieties $X$, subjected to the relation
\[
        \{X\}=\{Z\}+\{X\setminus Z\},
\]
for all closed subvarieties $Z$ of $X$.
It is possible to see that $\grot{\Var{\field}}$ has a ring structure given by $\{X\}\cdot \{Y\}=\{X\times Y\}$.

\noindent
The class of the empty set $\{\emptyset\}$ is also denoted with $0$, the class of the point $\{*\}$ with $1$ and the class of the affine line $\mathbb{L}=\{\mathbb{A}^1_{\field}\}$ is called Lefschetz class.
Using the multiplication operation, one gets $\{\mathbb{A}^n_{\field}\}=\mathbb{L}^n$ and $\{\mathbb{P}^n_{\field}\}=\mathbb{L}^0+\mathbb{L}^1+\dots+\mathbb{L}^n$. 

%Let $\field$ be field of characteristic zero. 
%
In \cite{Bittner2004}, Bittner proves that $\grot{\Var{\field}}$ is generated by the class of smooth and proper varieties modulo the relations 
$\{X\}+\{E\}=\{Bl_{Y}(X)\}+\{Y\}$
%\[
%	\{X\}+\{E\}=\{Bl_{Y}(X)\}+\{Y\}
%\]
with $Bl_{Y}(X)$ being the blow up of $X$ along $Y$ with exceptional divisor $E$:
\begin{diagram}
Bl_{Y}(X)       &       \rTo    & X\\
\uInto            &               & \uInto\\
E               &       \rTo    & Y.
\end{diagram}
Therefore, with the help of compactification and resolution of singularities one writes the class of a scheme $\{X\}\in \grot{\Var{\field}}$ as a sum of classes of smooth and proper varieties $\{X_j\}$: $\{X\}=\sum_j n_j\{X_j\}$, with $n_j\in \inte$.

The Motivic ring of algebraic $\field$-varieties is $\grot{\Var{\field}}[\mathbb{L}^{-1}]$.
%
%SOME EXAMPLE HERE.
%
We naturally define a filtration in dimension
\[
        Fil^{n}\left(\grot{\Var{\field}}[\mathbb{L}^{-1}]\right)=\{\nicefrac{\{X\}}{\mathbb{L}^i}: \dim{X}-i\leq n\}.
\]
We denote by $\grotcom{\Var{\field}}$ the completion of the Motivic ring with respect to this filtration. This ring is called Kontsevich's value ring.

\begin{defi}
  Let $X$ be a scheme over $\field$. 
  A $G$-torsor $P$ over $X$, $P\rightarrow X$, is a scheme with a free $G$-action that is locally trivial over $S$.
\end{defi}

% We recall that a $G$-torsor $P$ over a scheme $X$, $P\rightarrow X$, is a $G$-scheme, locally of the form $U \times_S G$, for a suitable open set $U$ of $X$.
%
\begin{defi}%\label{def-classifying-stack}
  The classifying stack $\B{G}$ of a group $G$ is a pseudo-functor from the category of schemes over $\field$, $\mathbf{Sch}_{\field}$, to the category of groupoids over $\field$, $\mathbf{Gpd}_{\field}$, sending any open scheme $U$ to the groupoid of $G$-torsors over $U$:
  \begin{eqnarray*}
	\B{G}: \mathbf{Sch}_{\field}	&\rightarrow& 	\mathbf{Gpd}_{\field}\\
		U			&\mapsto &	\{G\text{-torsors over }U\}.
  \end{eqnarray*}
\end{defi}

\noindent
Equivalently, with the only possible trivial action of $G$ on $*$, the classifying stack of the group $G$ is usually defined as the stack quotient $\B{G}=[\nicefrac{*}{G}]$.

%===========================================================
%===========================================================
\section{The class of the classifying stack}\label{sec-classifying-stack}
To go further, it is necessary to introduce another Grothendieck group for algebraic stacks.

%Recall that we denote by $\grot{\Var{\field}}$, $\grot{\Var{\field}}[\mathbb{L}^{-1}]$ and $\grotcom{\Var{\field}}$ the Grothendieck ring, the Motivic ring and, respectively, the Kontsevich value ring of algebraic $\field$-varieties, where $\field$ is an algebraic closed field of characteristic zero.

\begin{defi}\label{defi-grot-stack}
  We denote by $\grot{\Stck{\field}}$ the Grothendieck group of algebraic $\field$-stacks. This is the group generated by the isomorphism classes $\{X\}$ of algebraic $\field$-stacks $X$ of finite type all of whose automorphism group scheme are affine (shortly, algebraic $\field$-stack of finite type with affine stabilizer). 
  The elements of this group fulfill the following relations:
  \begin{enumerate}
    \item for each closed substack $Y$ of $X$, $\{X\}=\{Y\}+\{Z\}$, where $Z$ is the complement of $Y$ in $X$;
    \item for each vector bundle $E$ of constant rank $n$ over $X$, $\{E\}=\{X\times \mathbb{A}^n\}$.
  \end{enumerate}
\end{defi}

\noindent
Similarly to $\grot{\Var{\field}}$, $\grot{\Stck{\field}}$ has a ring structure.
\begin{lem}\label{lem-grot-localization-and-completion}
  One has that $$\grot{\Stck{\field}}=\grot{\Var{\field}}[\Lclass^{-1}, (\Lclass^{n}-1)^{-1}, \forall n\in \nat].$$
  Moreover, the completion map $\grot{\Var{\field}}[\Lclass^{-1}] \rightarrow \grotcom{\Var{\field}}$ factors through 
  \[
    \grot{\Var{\field}}[\Lclass^{-1}] \rightarrow \grot{\Stck{\field}} %%\stackrel{\phi}{\rightarrow} 
              \rightarrow \grotcom{\Var{\field}}.
  \]
\end{lem}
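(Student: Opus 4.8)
The plan is to construct mutually inverse ring homomorphisms between $\grot{\Stck{\field}}$ and the localization $R:=\grot{\Var{\field}}[\Lclass^{-1},(\Lclass^n-1)^{-1}:n\in\nat]$. For the map $R\to\grot{\Stck{\field}}$ I start from the inclusion $\grot{\Var{\field}}\hookrightarrow\grot{\Stck{\field}}$, which is a well-defined ring homomorphism since the scissor relations agree and the vector-bundle relation is automatically satisfied on varieties (a vector bundle is Zariski-locally trivial, so $\{E\}=\Lclass^n\{X\}$ already in $\grot{\Var{\field}}$). To extend it over $R$ I must check that $\Lclass$ and each $\Lclass^n-1$ are units in $\grot{\Stck{\field}}$, and this is forced by the two defining relations: applying the vector-bundle relation to the tautological line bundle $[\mathbb{A}^1/\mathbb{G}_m]\to\B{\mathbb{G}_m}$ together with the scissor relation $\mathbb{A}^1=\{0\}\sqcup\mathbb{G}_m$ gives $(\Lclass-1)\{\B{\mathbb{G}_m}\}=1$; applying them to the rank-two unipotent bundle $[\mathbb{A}^2/\mathbb{G}_a]\to\B{\mathbb{G}_a}$ gives $\Lclass(\Lclass-1)\{\B{\mathbb{G}_a}\}=\Lclass-1$, whence $\Lclass\{\B{\mathbb{G}_a}\}=1$; and the left-multiplication representation of $\operatorname{GL}_n$ on $n\times n$ matrices yields $\{\operatorname{GL}_n\}\{\B{\operatorname{GL}_n}\}=1$ with $\{\operatorname{GL}_n\}=\prod_{i=0}^{n-1}(\Lclass^n-\Lclass^i)=\Lclass^{\binom{n}{2}}\prod_{j=1}^{n}(\Lclass^j-1)$, so that $\Lclass^n-1$ is invertible by induction on $n$.

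For the reverse map I would use the structure theory of the stacks in question. Every algebraic $\field$-stack of finite type with affine stabilizers admits a finite stratification by locally closed substacks each isomorphic to a quotient stack $[X/\operatorname{GL}_n]$ for a $\field$-variety $X$ (affineness of the stabilizers lets one embed them into some $\operatorname{GL}_n$ and build the stratification inductively). Since $\operatorname{GL}_n$ is special, $\{[X/\operatorname{GL}_n]\}=\{X\}/\{\operatorname{GL}_n\}\in R$, and together with the scissor relation in $\grot{\Stck{\field}}$ this already shows $R\to\grot{\Stck{\field}}$ is surjective. It also dictates the candidate inverse: send $\{[X/\operatorname{GL}_n]\}\mapsto\{X\}/\{\operatorname{GL}_n\}$.

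The \emph{main obstacle} is proving this inverse is well defined, i.e.\ that $\{X\}/\{\operatorname{GL}_n\}$ respects the scissor and vector-bundle relations and, crucially, is independent of the presentation of a stack as a $\operatorname{GL}_n$-quotient. Distinct presentations must be reconciled via the associated-bundle construction along the standard embeddings $\operatorname{GL}_n\hookrightarrow\operatorname{GL}_{n+1}$, under which the fraction $\{X\}/\{\operatorname{GL}_n\}$ is unchanged because $\operatorname{GL}_{n+1}$ is special and the relevant fiber bundles are Zariski-locally trivial; one then shows any two stratifications admit a common refinement compatible with these moves. This descent/dévissage is the technical heart of the statement and is exactly the content of Ekedahl's theorem on the Grothendieck group of stacks, which I would invoke at this level rather than reprove.

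Finally, for the factorization of the completion map it suffices, by the universal property of localization, to check that $\Lclass$ and every $\Lclass^n-1$ map to units in $\grotcom{\Var{\field}}$. The class $\Lclass$ is invertible there by hypothesis. Writing $\Lclass^n-1=\Lclass^n(1-\Lclass^{-n})$, I note that $\Lclass^{-n}$ lies in the negative step $Fil^{-n}$ of the dimension filtration, so its powers tend to $0$ and the geometric series $\sum_{k\geq 0}\Lclass^{-nk}$ converges in $\grotcom{\Var{\field}}$ and inverts $1-\Lclass^{-n}$; hence $\Lclass^n-1$ is a unit. Therefore $\grot{\Var{\field}}[\Lclass^{-1}]\to\grotcom{\Var{\field}}$ factors uniquely as $\grot{\Var{\field}}[\Lclass^{-1}]\to\grot{\Stck{\field}}\to\grotcom{\Var{\field}}$, using the identification $\grot{\Stck{\field}}=R$ from the first part.
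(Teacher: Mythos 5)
Your proposal is correct, and the two halves of it compare differently with the paper. On the factorization statement your argument is exactly the paper's: write $\Lclass^{n}-1=\Lclass^{n}(1-\Lclass^{-n})$, observe that the geometric series $\sum_{k\geq 0}\Lclass^{-nk}$ converges in $\grotcom{\Var{\field}}$ because its terms lie in ever deeper steps $Fil^{-nk}$ of the dimension filtration, and conclude by the universal property of localization; your phrasing is in fact slightly cleaner than the paper's, which says the truncations themselves lie in $Fil^{-kn}$ when the relevant point is that consecutive differences do (i.e.\ the partial sums are Cauchy). On the identification $\grot{\Stck{\field}}=\grot{\Var{\field}}[\Lclass^{-1},(\Lclass^{n}-1)^{-1}]$ the paper gives no argument at all --- it cites Theorem 1.2 of Ekedahl --- whereas you reconstruct the skeleton of that proof: invertibility of $\Lclass-1$, $\Lclass$, and $\{\operatorname{GL}_n\}=\Lclass^{\binom{n}{2}}\prod_{j=1}^{n}(\Lclass^{j}-1)$ in $\grot{\Stck{\field}}$ via the classes of $\B{\mathbb{G}_m}$, $\B{\mathbb{G}_a}$, $\B{\operatorname{GL}_n}$ (your three computations are correct, including the cancellation of the already-invertible $\Lclass-1$ in the $\mathbb{G}_a$ step), and surjectivity via Kresch's stratification of finite-type stacks with affine stabilizers by quotients $[\nicefrac{X}{\operatorname{GL}_n}]$ together with the torsor-class formula $\{X\}=\{\operatorname{GL}_n\}\{[\nicefrac{X}{\operatorname{GL}_n}]\}$ for the special group $\operatorname{GL}_n$ (a fact the paper itself states without proof just before Lemma 2.3). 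Since you, too, ultimately invoke Ekedahl's theorem for the genuinely hard step --- well-definedness of the inverse, i.e.\ independence of the $\operatorname{GL}_n$-presentation --- the logical content of the two treatments is the same; what your version buys is an explanation of why this particular localization is the right ring, at the price of importing Kresch's stratification theorem and the dévissage behind the speciality formula, both of which carry nontrivial proofs of their own.
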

\begin{proof}
        The first part is proved in Theorem 1.2 of \cite{Ekedahl-inv}. 
        Regarding the second one, we observe that $\Lclass^{n}-1=\Lclass^n(1-\Lclass^{-n})$ is invertible in $\grotcom{\Var{\field}}$.
	Indeed, $(1-\Lclass^{-n})^{-1}=1+\Lclass^{-n}+\Lclass^{-2n}+\dots$ and each truncation $x_k=\sum_{j=0}^k\Lclass^{-kn}$ belongs to $Fil^{-kn}$. So, the serie converges in $\grotcom{\Var{\field}}$.
\end{proof}
%
%We remark that a $G$-torsor $P$ over a scheme $X$, $P\rightarrow X$, is a $G$-scheme, locally of the form $U \times_S G$, for a suitable open set $U$ of $X$.
%
% \begin{defi}
% 	The classifying stack of a group $G$, $\B{G}$, is a pseudo-functor from the category of schemes, $\mathbf{Sch}_{\field}$, to the category of groupoids, $\mathbf{Gpd}_{\field}$, sending any open scheme $U$ to the groupoids of $\{$G$\text{-torsors over }U\}$:
% 	\begin{eqnarray*}
% 		\B{G}: \mathbf{Sch}_{\field}	&\rightarrow& 	\mathbf{Gpd}_{\field}\\
% 			U			&\mapsto &	\{G\text{-torsors over }U\}
% 	\end{eqnarray*}
% \end{defi}
% %
% The classifying stack of the group $G$ is also defined as the stack quotient $[*/G]$.
%

Recall that a special group $W$ is a connected algebraic group scheme of finite type all of whose torsors over any extension field $\field\subseteq \Field$ are trivial.
It is useful for what to note that $GL_n$ is a special group.

Let $W$ be a special group and let $X\rightarrow Y$ be a $W$-torsor of algebraic stacks of finite type over $\field$, then $\{X\}=\{W\}\{Y\}$ in $\grot{\Stck{\field}}$.
Moreover, if $F$ is a $W$-space and if $Z\rightarrow Y$ is the associated fiber space to the $G$-torsor $X\rightarrow Y$, then $\{Z\}=\{F\}\{Y\}$ in $\grot{\Stck{\field}}$.

\noindent
Those two facts are not true for every group $G$. Indeed special groups play an important role in this topic (see Proposition 1.4 of \cite{EkedahlStack}).

\begin{lem}\label{lem-group-basic}
  If $W$ is a special group over $\field$ and if $H$ is a closed subgroup scheme of $W$, then
\begin{description}
  \item [a)] $\{W\}\{\B{W}\}=1$;
  \item [b)] $\{\B{H}\}=\{\nicefrac{W}{H}\}\{\B{W}\}$.
\end{description}
\end{lem}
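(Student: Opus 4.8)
The plan is to deduce both statements from the two displayed facts about special groups stated just before the lemma---the torsor relation $\{X\}=\{W\}\{Y\}$ and the associated fiber-space relation $\{Z\}=\{F\}\{Y\}$---applied to the \emph{universal} $W$-torsor $*\rightarrow \B{W}$. Recall from the definition that $\B{W}=[\nicefrac{*}{W}]$, so the atlas $*\rightarrow \B{W}$ is a $W$-torsor; this is the defining presentation of the quotient stack. Since $W$ is a special (in particular connected affine) group, $\B{W}$ is an algebraic $\field$-stack of finite type with affine stabilizer, and the source $*$ is a scheme of finite type, so the hypotheses of both displayed facts are met.

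For part \textbf{a)}, I would apply the torsor relation with $X=*$ and $Y=\B{W}$. Because $W$ is special and $*\rightarrow \B{W}$ is a $W$-torsor, this yields $\{*\}=\{W\}\{\B{W}\}$ in $\grot{\Stck{\field}}$. Since $\{*\}=1$ by definition, we conclude $\{W\}\{\B{W}\}=1$ at once.

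For part \textbf{b)}, I would use the associated fiber space construction. The coset space $\nicefrac{W}{H}$ is a $W$-space under left translation. Forming the fiber space $Z\rightarrow \B{W}$ associated to the universal torsor $*\rightarrow \B{W}$ with fiber $\nicefrac{W}{H}$, the fiber-space relation gives $\{Z\}=\{\nicefrac{W}{H}\}\{\B{W}\}$. It then remains to identify $Z$ with $\B{H}$. By construction $Z=\left[\nicefrac{(\nicefrac{W}{H})}{W}\right]$, the quotient stack of $\nicefrac{W}{H}$ by the left $W$-action; since this action is transitive with stabilizer $H$ at the base coset, one has $\left[\nicefrac{(\nicefrac{W}{H})}{W}\right]\cong [\nicefrac{*}{H}]=\B{H}$. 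Combining the two displays gives $\{\B{H}\}=\{Z\}=\{\nicefrac{W}{H}\}\{\B{W}\}$, as desired. (Note that part \textbf{a)} is recovered as the special case $H=\{e\}$, where $\nicefrac{W}{H}=W$ and $\B{H}=*$.)

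The main obstacle will be establishing the stack isomorphism $Z\cong \B{H}$ rigorously, that is, checking that the associated fiber space of the universal $W$-torsor with fiber $\nicefrac{W}{H}$ really is the classifying stack of $H$. This is a quotient-stack/descent identification; I would verify it either through the explicit transitive-action argument above, or functorially by exhibiting, for each scheme $U$, an equivalence between the groupoid of $H$-torsors over $U$ and the groupoid of pairs consisting of a $W$-torsor over $U$ together with a reduction of its structure group to $H$ (equivalently, a section of the associated $\nicefrac{W}{H}$-bundle). Once this identification is in place, both items follow as direct applications of the two structural formulas for special groups.
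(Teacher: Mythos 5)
Your proposal is correct and takes essentially the same route as the paper: part \textbf{a)} by applying the torsor relation to the universal torsor $*\rightarrow \B{W}=[\nicefrac{*}{W}]$, and part \textbf{b)} by forming the associated fiber space with fiber $\nicefrac{W}{H}$ and identifying it with $\B{H}$. The identification you flag as the main remaining obstacle is precisely what the paper asserts (without further justification) as $\B{H}=\nicefrac{W}{H}\times_{W}\B{W}$, so your sketch of its verification only makes the argument more complete.
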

\begin{proof}
  Consider the $W$-torsor $*\rightarrow [\nicefrac{*}{W}]$. 
  Thus, $\{*\}=\{W\}\{[\nicefrac{*}{W}]\}$.
  Moreover $*\rightarrow [\nicefrac{*}{W}]$ is also an $H$-torsor and the action of $H$ makes $\nicefrac{W}{H}$ a $H$-space.
  There is a natural $\nicefrac{W}{H}$-fibration associated, $[\nicefrac{*}{H}]\rightarrow [\nicefrac{*}{W}]$, given by $\B{H}=\nicefrac{W}{H}\times_{W}\B{W}$ and thus $\{\B{H}\}=\{\nicefrac{W}{H}\}\{\B{W}\}$.
\end{proof}

\noindent
We now consider only finite groups.
\begin{lem}\label{lem-formula-BG}
  If $V$ be an $n$-dimensional linear representation of $G$ and let $G$ act component-wise on $V^m$.
  Then
  \begin{eqnarray}
    \{[V^m/G]\} &=&\Lclass^{nm}\cl \label{eq-V/G};\\
    %\{[\mathbb{P}(V)/G]\}&=&\left(\frac{\Lclass^{n}-1}{\Lclass -1}\right)\cl \label{eq-P(V)/G}
    \{[\mathbb{P}(V)/G]\}&=&\left(1+\Lclass^{1}+\dots+\Lclass^{n-1}\right)\cl \label{eq-P(V)/G}.
  \end{eqnarray}
\end{lem}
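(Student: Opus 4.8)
The plan is to use the key structural result from Lemma \ref{lem-group-basic} together with the special group $\operatorname{GL}_n$, since any $n$-dimensional linear representation of a finite group $G$ furnishes an inclusion $G \hookrightarrow \operatorname{GL}_n$ as a closed subgroup scheme. The strategy throughout is to reduce computations about the finite group $G$ to computations about the ambient special group $W = \operatorname{GL}_n$, where torsors are trivial and the class formulas $\{X\}=\{W\}\{Y\}$ for $W$-torsors and $\{Z\}=\{F\}\{Y\}$ for associated fiber spaces are available.

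For equation (\ref{eq-V/G}), I would first observe that the product representation $V^m$ realizes $G$ as a closed subgroup of $\operatorname{GL}_{nm}$, and that $V^m$ is itself the affine space $\mathbb{A}^{nm}$ on which $G$ acts. The natural object to exploit is the fiber space over $\B{G}=[\nicefrac{*}{G}]$ with fiber $V^m$: the stack $[V^m/G]$ is the associated bundle to the $G$-torsor $* \to [\nicefrac{*}{G}]$ with fiber the $G$-space $V^m$. Applying the associated-fiber-space formula gives $\{[V^m/G]\}=\{V^m\}\{\B{G}\}=\Lclass^{nm}\cl$, since $\{V^m\}=\{\mathbb{A}^{nm}\}=\Lclass^{nm}$. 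The one point requiring care is that this fiber-space relation was stated for special groups $W$; here the base is $\B{G}$ for finite $G$, so I would either invoke it through the $\operatorname{GL}_n$-torsor presentation or note that $[V^m/G]$ is simply a vector bundle of rank $nm$ over $\B{G}$, so relation (2) in Definition \ref{defi-grot-stack} directly yields $\{[V^m/G]\}=\{\B{G}\times\mathbb{A}^{nm}\}=\Lclass^{nm}\cl$. The latter route is cleaner and avoids any appeal to specialness.

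For equation (\ref{eq-P(V)/G}), the approach is to pass from the vector space $V$ (with $m=1$) to its projectivization. I would remove the origin: the $G$-action on $V$ restricts to a free-on-the-base action away from $0$, and $\mathbb{P}(V)=\nicefrac{(V\setminus\{0\})}{\field^*}$, so $[\mathbb{P}(V)/G]$ is obtained from $[V\setminus\{0\}/G]$ by quotienting the scaling $\field^*$-action. Using the scissor relation $\{V\}=\{0\}+\{V\setminus\{0\}\}$ at the level of stack quotients gives $\{[(V\setminus\{0\})/G]\}=(\Lclass^{n}-1)\cl$, invoking (\ref{eq-V/G}) with $m=1$ and $\{[0/G]\}=\{\B{G}\}=\cl$. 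Then $[\mathbb{P}(V)/G]$ relates to $[(V\setminus\{0\})/G]$ as a $\field^*$-torsor (the $\mathbb{G}_m$-bundle of the tautological line), and since $\mathbb{G}_m=\operatorname{GL}_1$ is special with class $\Lclass-1$, the torsor formula gives $\{[(V\setminus\{0\})/G]\}=(\Lclass-1)\{[\mathbb{P}(V)/G]\}$. Combining, $(\Lclass-1)\{[\mathbb{P}(V)/G]\}=(\Lclass^n-1)\cl$, and dividing by the invertible factor $\Lclass-1$ (invertible in $\grot{\Stck{\field}}$ by Lemma \ref{lem-grot-localization-and-completion}) yields $\{[\mathbb{P}(V)/G]\}=\frac{\Lclass^n-1}{\Lclass-1}\cl=(1+\Lclass+\dots+\Lclass^{n-1})\cl$.

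The main obstacle I anticipate is justifying the associated-fiber-space and torsor identities in the finite-group setting, where $G$ is not special and the quoted formulas technically require a special structure group. The cleanest way to handle this is to keep all the special-group arguments attached to $\mathbb{G}_m$ and $\operatorname{GL}_n$ rather than to $G$ itself: the vector-bundle relation (2) of Definition \ref{defi-grot-stack} handles (\ref{eq-V/G}) outright, and for (\ref{eq-P(V)/G}) the only torsor invoked is a $\mathbb{G}_m$-torsor, whose structure group is special. Thus the nontriviality is purely bookkeeping about which group is acting at each stage, and I expect no essential difficulty once the $\mathbb{G}_m$-bundle is identified correctly.
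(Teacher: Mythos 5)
Your proposal is correct and follows essentially the same route as the paper: equation (\ref{eq-V/G}) via the vector bundle relation (2) of Definition \ref{defi-grot-stack} applied to $[\nicefrac{V^m}{G}]\rightarrow \B{G}$, and equation (\ref{eq-P(V)/G}) by removing the origin, using the scissor relation to get $(\Lclass^n-1)\cl$, and identifying $[\nicefrac{V\setminus\{O\}}{G}]\rightarrow[\mathbb{P}(V)/G]$ as a $\mathbb{G}_m$-torsor with $\mathbb{G}_m$ special. Your explicit remark that the division by $\Lclass-1$ is licensed by Lemma \ref{lem-grot-localization-and-completion} is a point the paper leaves implicit, but otherwise the two arguments coincide.
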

\begin{proof}
  From the vector bundle $[\nicefrac{V}{G}]\rightarrow \B{G}$ and from the second property in Definition \ref{defi-grot-stack}, one has that $\{[\nicefrac{V}{G}]\}=\Lclass^{n}\cl$. Similarly, one proves the first equation.
  
  \noindent
  Let $O$ be the origin of $V$.
  The natural map $[\nicefrac{V\setminus \{O\}}{G}]\rightarrow [\mathbb{P}(V)/G]$ is a $\mathbb{G}_m$-torsor and this implies $\{[\nicefrac{V\setminus \{O\}}{G}]\}=(\Lclass-1)\{[\mathbb{P}(V)/G]\}$.
  Moreover, $\{[\nicefrac{V\setminus \{O\}}{G}]\}=(\Lclass^{n}-1)\cl=(\Lclass-1)\{[\mathbb{P}(V)/G]\}$.
\end{proof}
%
%We stress that we do not require that $V$ is irreducible or faithful.
%
\noindent
Formula (\ref{eq-V/G}) expresses how $\cl$ is connected with $\{[\nicefrac{V^m}{G}]\}$. 
The next proposition links $\cl$ to $\{\nicefrac{V^m}{G}\}$. 
Behind this result there is the study of the difference between $\{[\nicefrac{V^m}{G}]\}$ and $\{\nicefrac{V^m}{G}\}$ in $K_0(Var_{\field})[\mathbb{L}^{-1}]$.

We write an element of $V^m$ as
\[
        v=(v_1, \dots, v_n, v_{n+1}, \dots, v_{2n}, \dots,v_{(k-1)n+1}, \dots, v_{kn},v_{kn+1},\dots, v_{m})
\]
with $k=\lfloor \nicefrac{m}{n} \rfloor$.
In other words, we consider $v\in V^m$ as a sequence of sets made by $n$ vectors each. 
Let $U$ be the subset of $V^m$ such that at least one of the sets $\{v_{jn+1}, \dots, v_{jn+n}\}$ is a basis for $V$.%, for $j=0,\dots, k$.

\noindent
We denote by $M$ the complement of $U$ in $V^m$. 
This is a closed subset of $V$, because it is defined by $k$ equations $\det(v_{jn+1}, \dots, v_{jn+n})=0$, for $j=0,\dots, k-1$. 
Therefore, $\operatorname{codim}(M)=\operatorname{codim}(\nicefrac{M}{G})=k$.
% and
% \[
%         \lim_{m\rightarrow \infty}\operatorname{codim}(M)=\lim_{m\rightarrow \infty}\operatorname{codim}(\nicefrac{M}{G})=+\infty.
% \]
We also observe that $U$ is $\Gl{n}{\field}$-invariant, because any linear transformation in $\Gl{n}{\field}$ moves a basis of $V$ into another one. 
Moreover, $\Gl{n}{\field}$ (and so $G$) acts freely on it, hence $[\nicefrac{U}{G}]=\nicefrac{U}{G}$.

\noindent
The difference $\{[\nicefrac{V^m}{G}]\}-\{\nicefrac{V^m}{G}\}$ becomes
\begin{eqnarray}
        \{[\nicefrac{V^m}{G}]\}-\{\nicefrac{V^{m}}{G}\} &=& \left(\{\mathcal{Z}\}+\{\nicefrac{U}{G}\} \right)- \left(\{\nicefrac{M}{G}\}+\{\nicefrac{U}{G}\} \right) \nonumber\\
                                                &=& \{\mathcal{Z}\}-\{\nicefrac{M}{G}\}, \label{eq-difference-VG}
\end{eqnarray}
where $\mathcal{Z}$ is a stack, complement of $\nicefrac{U}{G}$ in $[\nicefrac{V^m}{G}]$.
Similarly to $\nicefrac{M}{G}$, $\mathcal{Z}$ has codimension $k$ because both are the complement of the same object $\nicefrac{U}{G}$, but in two different environments $\nicefrac{V^{m}}{G}$ and $[\nicefrac{V^m}{G}]$ with the same dimension.
The class of the difference $\{[\nicefrac{V^m}{G}]\}-\{\nicefrac{V^{m}}{G}\}$ is so determined by the class of these complements.
%
% We see, in the next proposition, how this implies that 
% \[
%         \cl=\lim_{m\rightarrow \infty} \frac{\{\nicefrac{V^{m}}{G}\}}{\Lclass^{mn}} \in \grotcom{\Var{\field}}.
% \]

\begin{pro}[Proposition 3.1 in \cite{EkedahlStack}]\label{prop-property-BG}      
  If $V$ is an $n$-dimensional faithful linear representation of $G$, then
\begin{description}
  \item [a)] $\{\B{G}\}=\nicefrac{\{\nicefrac{GL(V)}{G}\}}{\{GL(V)\}}$;
  \item [b)] The image of $\cl$ in $\grotcom{\Var{\field}}$ is equal to $\lim_{m\rightarrow \infty} \{\nicefrac{V^{m}}{G}\} \Lclass^{-mn}$.
  %\item [c)] $\chi_c(\{\B{G}\})=1$.
\end{description}
\end{pro}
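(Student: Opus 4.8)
For part \textbf{a)}, since $V$ is faithful the representation realizes $G$ as a finite, hence closed, subgroup scheme of $GL(V)$, and $GL(V)\cong\Gl{n}{\field}$ is a special group. I would then apply Lemma \ref{lem-group-basic} with $W=GL(V)$ and $H=G$: part \textbf{b)} of that lemma gives $\cl=\{\nicefrac{GL(V)}{G}\}\{\B{GL(V)}\}$, while part \textbf{a)} gives $\{GL(V)\}\{\B{GL(V)}\}=1$, i.e. $\{\B{GL(V)}\}=\{GL(V)\}^{-1}$. Substituting the second relation into the first yields $\cl=\nicefrac{\{\nicefrac{GL(V)}{G}\}}{\{GL(V)\}}$, which is the assertion.

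For part \textbf{b)} I would work through the map $\grot{\Stck{\field}}\to\grotcom{\Var{\field}}$ supplied by Lemma \ref{lem-grot-localization-and-completion}. The starting point is formula (\ref{eq-V/G}), $\{[\nicefrac{V^m}{G}]\}=\Lclass^{nm}\cl$, so that $\cl=\{[\nicefrac{V^m}{G}]\}\Lclass^{-nm}$ for every $m$. Feeding in the decomposition (\ref{eq-difference-VG}), namely $\{[\nicefrac{V^m}{G}]\}-\{\nicefrac{V^m}{G}\}=\{\mathcal{Z}\}-\{\nicefrac{M}{G}\}$, I would rewrite
\[
  \{\nicefrac{V^m}{G}\}\Lclass^{-nm}=\cl-\left(\{\mathcal{Z}\}-\{\nicefrac{M}{G}\}\right)\Lclass^{-nm},
\]
so that the statement reduces to showing that the error $\left(\{\mathcal{Z}\}-\{\nicefrac{M}{G}\}\right)\Lclass^{-nm}$ tends to $0$ in $\grotcom{\Var{\field}}$ as $m\to\infty$.

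To control this error I would use the dimension filtration together with the fact, emphasized right before the statement, that both $\mathcal{Z}$ and $\nicefrac{M}{G}$ have codimension $k=\lfloor\nicefrac{m}{n}\rfloor$ inside $nm$-dimensional objects, hence dimension $nm-k$. For the variety $\nicefrac{M}{G}$ this is immediate: $\{\nicefrac{M}{G}\}\Lclass^{-nm}\in Fil^{(nm-k)-nm}=Fil^{-k}$. For the stack $\mathcal{Z}=[\nicefrac{M}{G}]$ I would first record that the class of the classifying stack of any finite group lies in $Fil^0$: by part \textbf{a)}, applied to a faithful representation of that group, it equals $\{\nicefrac{GL_n}{H}\}\{GL_n\}^{-1}$, and since $\{GL_n\}=\Lclass^{n^2}(1+\cdots)$ is invertible with $\{GL_n\}^{-1}\in Fil^{-n^2}$ while $\dim\nicefrac{GL_n}{H}=n^2$, the product lands in $Fil^0$. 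Stratifying $M$ by stabilizer type then writes $\{\mathcal{Z}\}$ as a finite sum of terms, each a $Fil^0$ classifying-stack class times the class of a variety of dimension at most $nm-k$; thus $\{\mathcal{Z}\}\in Fil^{nm-k}$ and $\{\mathcal{Z}\}\Lclass^{-nm}\in Fil^{-k}$ as well.

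Combining the estimates, the error lies in $Fil^{-k}$ with $k=\lfloor\nicefrac{m}{n}\rfloor\to\infty$, so by definition of the completion it converges to $0$ and $\lim_{m\to\infty}\{\nicefrac{V^m}{G}\}\Lclass^{-nm}=\cl$. I expect the main obstacle to be precisely the filtration estimate for the stacky remainder $\mathcal{Z}$: whereas the degree of $\{\nicefrac{M}{G}\}\Lclass^{-nm}$ follows from a plain dimension count, $\mathcal{Z}$ is a nontrivial quotient stack, so its filtration degree must be extracted from the stabilizer stratification together with the $Fil^0$ bound for classifying stacks of finite groups obtained via part \textbf{a)}. Everything else is manipulation of the already established formulas (\ref{eq-V/G}) and (\ref{eq-difference-VG}).
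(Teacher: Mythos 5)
Your part \textbf{a)} is exactly the paper's proof, and the skeleton of your part \textbf{b)} — reducing via formulas (\ref{eq-V/G}) and (\ref{eq-difference-VG}) to showing $\left(\{\mathcal{Z}\}-\{\nicefrac{M}{G}\}\right)\Lclass^{-nm}\rightarrow 0$, and the estimate $\{\nicefrac{M}{G}\}\Lclass^{-nm}\in Fil^{-k}$ — is also the paper's argument; your observation that $\{\B{H}\}\in Fil^{0}$ for any finite group $H$ is correct as well. The gap is in the step you yourself flag as the crux. You claim that stratifying $M$ by stabilizer type writes $\{\mathcal{Z}\}$ as a finite sum of terms, each of the form (class of a variety of dimension $\leq nm-k$) times (class of $\B{H}$). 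Concretely, the stratum with stabilizer conjugate to $H$ contributes $\{[\nicefrac{M_H}{N_G(H)}]\}$, where $M_H$ is the locus with stabilizer exactly $H$; on it $H$ acts trivially and $W=\nicefrac{N_G(H)}{H}$ acts freely, so $[\nicefrac{M_H}{N_G(H)}]\rightarrow \nicefrac{M_H}{W}$ is a fibration with fiber $\B{H}$, i.e.\ a gerbe. Such a gerbe need not be trivial, and is in general not even Zariski-locally trivial, so the scissor relations of Definition \ref{defi-grot-stack} give no factorization of its class; the only multiplicativity results available in the paper (the two facts stated before Lemma \ref{lem-group-basic}) require the structure group to be \emph{special}, and a nontrivial finite group is never special. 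So the asserted product decomposition is unjustified, and with it the bound $\{\mathcal{Z}\}\in Fil^{nm-k}$.

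The bound is nevertheless true, and the correct way to get it — in effect what lies behind the paper's terse ``with a similar argument'' — is to apply the special-group trick to $\mathcal{Z}$ itself, with no stratification and no product claims for finite groups. One has $\mathcal{Z}=[\nicefrac{M}{G}]$, and $G$ acts freely on $M\times GL(V)$ (diagonally, acting on the second factor by translation), so $P=\nicefrac{\left(M\times GL(V)\right)}{G}$ is a variety of dimension $\dim M+n^{2}$ and $[\nicefrac{M}{G}]\cong[\nicefrac{P}{GL(V)}]$. Since $GL(V)$ is special, the $GL(V)$-torsor $P\rightarrow \mathcal{Z}$ gives $\{P\}=\{GL(V)\}\{\mathcal{Z}\}$ in $\grot{\Stck{\field}}$, hence $\{\mathcal{Z}\}=\{P\}\{GL(V)\}^{-1}$. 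By your own expansion, the image of $\{GL(V)\}^{-1}$ in $\grotcom{\Var{\field}}$ lies in $Fil^{-n^{2}}$, so $\{\mathcal{Z}\}\in Fil^{\dim M}=Fil^{nm-k}$ and $\{\mathcal{Z}\}\Lclass^{-nm}\in Fil^{-k}\rightarrow 0$. With this replacement, the rest of your argument goes through verbatim and agrees with the paper's proof.
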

\begin{proof}
        The general linear group is a special group and we apply Lemma \ref{lem-group-basic}.\textbf{b} for $G\subseteq GL(V)$: $\{\B{G}\}=\{\nicefrac{GL(V)}{G}\}\{\B{GL(V)}\}$.
        Using Lemma \ref{lem-group-basic}.\textbf{a}, one gets $\{\B{GL(V)}\}=\nicefrac{1}{\{GL(V)\}}$ and so, we prove the first point.

        %Regarding the second one, we want to prove that 
        %\[
        %       \cl -\{\nicefrac{V^{m}}{G}\}\Lclass^{-mn}\in Fil^{-\lfloor \nicefrac{m}{n} \rfloor}(K_0(Var_{\field})[\mathbb{L}^{-1}]).
        %\]
        Using formula (\ref{eq-V/G}) and formula (\ref{eq-difference-VG}) one has 
        \begin{eqnarray*}
	  \cl -\{\nicefrac{V^{m}}{G}\}\Lclass^{-mn}
	  &=& \left(\{[\nicefrac{V^m}{G}]\}-\{\nicefrac{V^{m}}{G}\}\right)\Lclass^{-mn}\\
	  &=& \left(\{\mathcal{Z}\}-\{\nicefrac{M}{G}\}\right)\Lclass^{-mn}
        \end{eqnarray*}
        where $\mathcal{Z}$ and $\nicefrac{M}{G}$ are respectively the complement of $\nicefrac{U}{G}$ firstly seen inside of $[\nicefrac{V^m}{G}]$ and then inside of $\nicefrac{V^m}{G}$. 
        The open set $U$ was defined just before this lemma.
	
	\noindent
        Remark that $Fil^{j}(\grot{\Var{\field}}[\mathbb{L}^{-1}])=\{\nicefrac{\{X\}}{\mathbb{L}^i}: \dim{X}-i\leq j\}$. 
        Then, 
        $$\{\nicefrac{M}{G}\}\Lclass^{-mn}\in Fil^{j}(\grot{\Var{\field}}[\mathbb{L}^{-1}])\Leftrightarrow \dim{\nicefrac{M}{G}}-mn\leq j.$$
        One knows that $\dim{\nicefrac{M}{G}}-mn=-\operatorname{codim}(\nicefrac{M}{G})=-k$. 
        Thus $\{\nicefrac{M}{G}\}\Lclass^{-mn}$ belongs to $Fil^{j}$ for any $j\geq -k=\lfloor \nicefrac{m}{n} \rfloor$.
        Therefore, 
        $$\lim_{m\rightarrow \infty} \{\nicefrac{M}{G}\} \Lclass^{-mn}=0$$ 
        and, with a similar argument, $\lim_{m\rightarrow \infty} \{\mathcal{Z}\} \Lclass^{-mn}=0$. Thus, $\cl -\{\nicefrac{V^{m}}{G}\}\Lclass^{-mn}$ converges to zero in $\grotcom{\Var{\field}}$.
        %
	%We refer to Proposition 3.1 in \cite{EkedahlStack} for \textbf{c)}.
\end{proof}

% From now on, we need $\operatorname{char}(\field)=0$ to use the Bittner result \cite{Bittner2004}.
%
%The reader can fix $\field=\com$

\section{The Ekedahl invariants for finite groups}\label{sec-ekedahl-invariants}
We want to define certain cohomological maps $\operatorname{H}^k$ for $\grotcom{\Var{\field}}$. 
These (and the invariants we are going to define) need a more refined target: Let $\Lo{\Ab}$ be the group generated by the isomorphism classes $\{G\}$ of finitely generated abelian groups $G$ under the relation $\{A\oplus B\}=\{A\}+\{B\}$. 
We equip $\Lo{\Ab}$ with the discrete topology.
%
%Moreover, let $\Lof{\Ab}$ be the subgroup of $\Lo{\Ab}$ generated by the classes of finite groups.

\noindent
For clarification, $\{\mathbb{Z}\}$ and $\{\mathbb{Z}/p^n\}$ belong to $\Lo{\Ab}$ and there are elements in $\Lo{\Ab}$ that do not correspond to any group: while $\{\mathbb{Z}\}+\{\mathbb{Z}/5\}$ is the class of $\{\mathbb{Z}\oplus\mathbb{Z}/5\}$, the element $\{\mathbb{Z}\}-\{\mathbb{Z}/5\}$ is not the class of any group.

% \begin{defi}\label{def-ek-ab}
% We denote by $\Lo{\Ab}$ the group generated by isomorphism classes of finitely generated abelian groups under the relation that $\{A \oplus B\}=\{A\}+\{B\}$. 
% %
% We equip $\Lo{\Ab}$ with the discrete topology.
% 
% \noindent
% Moreover, let $\Lof{\Ab}$ be the subgroup of $\Lo{\Ab}$ generated by the classes of finite groups.
% %
% %We also denote by $\Lo{\Abgr}=\Lo{\Ab}[t,t^{-1}]$, that is the graded version of the $\Lo{\Ab}$.
% \end{defi}

If $\field=\com$, it is natural to define a cohomological map
\[
  \operatorname{H}^k: \grot{\Var{\field}}\rightarrow \Lo{\Ab},
\]
by assigning to every smooth and proper $\field$-variety $X$ the class of its integral cohomology group $\Hom{k}{X}{\inte}$.
If instead $\field$ is different from $\com$, then we send $\{X\}$ to the class $\{\Hom{k}{X}{\inte}\}$ defined as $\operatorname{dim}\Hom{k}{X}{\inte}\{\inte\}+\sum_p\{\operatorname{tor}\Hom{k}{X}{\mathbb{Z}_p}\}$.

% \footnote{%
%   In Proposition 3.4.\textbf{iii)} of \cite{EkedahlStack}, this map is called $\operatorname{NS}^k$. 
%   %
%   For any $X$ smooth and proper $\operatorname{NS}^k(\{X\})=\{\operatorname{NS}^k(X)\}$, where $\operatorname{NS}^k(X)(\bar{\field})$ is the group of algebraic codimension $k$-cycles on $X_{\bar{\field}}$ modulo homological equivalence with the natural $\operatorname{Gal}(\bar{\field}/\field)$-action. 
% }.
%
Next theorem shows that this map is well defined
% \footnote{%
%   We wrote it down in the introduction.
% } 
and that it can be extended to $\grotcom{\Var{\field}}$ sending $\nicefrac{\{X\}}{\mathbb{L}^m}$ to $\{\Hom{k+2m}{X}{\inte}\}$ for any smooth and proper variety $X$,
\[
        \operatorname{H}^k: \grotcom{\Var{\field}}\rightarrow \Lo{\Ab}.
\]
This theorem can be also proved in a slightly general setting using the corollaries in Section 7 and 9 of \cite{Manin-motifs}.

\begin{teo}\label{thm-cohomological-map-stack}
The following cohomological map
  \begin{eqnarray*}
  	\operatorname{H}^*:\grotcom{\Var{\field}}& \rightarrow & \Lo{\Ab}((t))\\
        \{Y\}           	& \mapsto     & \sum_{k\in \inte} \Homred{k}{\{Y\}} t^k.
  \end{eqnarray*}
  is well defined.
  For each $k\in \inte$, $\operatorname{H}^k:\grotcom{\Var{\field}}\rightarrow \Lo{\Ab}$ is also a continuous group homomorphism.
\end{teo}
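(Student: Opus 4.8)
The plan is to build the map $\operatorname{H}^*$ in two stages, following the structure of the filtration that defines $\grotcom{\Var{\field}}$. First I would establish the map on the level of smooth and proper varieties, then extend it to all of $\grot{\Var{\field}}$ using Bittner's presentation, then to the localization $\grot{\Var{\field}}[\Lclass^{-1}]$, and finally pass to the completion by a continuity argument. The crux of the whole construction is well-definedness: I must check that the proposed assignments respect every defining relation at each of these four stages.

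First I would define $\operatorname{H}^*(\{X\}) = \sum_k \{\Hom{k}{X}{\inte}\} t^k$ for $X$ smooth and proper, landing in $\Lo{\Ab}((t))$ because the cohomology of a proper variety vanishes outside degrees $0,\dots,2\dim X$. To descend to $\grot{\Var{\field}}$ via Bittner's theorem, I must verify the blow-up relation: if $Bl_Y(X)\to X$ is the blow-up of a smooth proper $X$ along a smooth center $Y$ of codimension $c$ with exceptional divisor $E$, then the classical blow-up formula for integral cohomology gives $\Hom{k}{Bl_Y(X)}{\inte}\cong \Hom{k}{X}{\inte}\oplus\bigoplus_{i=1}^{c-1}\Hom{k-2i}{Y}{\inte}$, while $\Hom{k}{E}{\inte}\cong\bigoplus_{i=0}^{c-1}\Hom{k-2i}{Y}{\inte}$ since $E$ is a projective bundle over $Y$. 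Subtracting, the extra summands cancel and one obtains $\{\Hom{k}{Bl_Y(X)}{\inte}\}+\{\Hom{k}{Y}{\inte}\}=\{\Hom{k}{X}{\inte}\}+\{\Hom{k}{E}{\inte}\}$ in $\Lo{\Ab}$, which is exactly the image of Bittner's relation $\{X\}+\{E\}=\{Bl_Y(X)\}+\{Y\}$. This identity, holding degree by degree, is the heart of the matter and is what makes $\operatorname{H}^k$ a homomorphism on $\grot{\Var{\field}}$.

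Next I would invert $\Lclass$. Since $\{\mathbb{A}^1\}=\Lclass$ has $\operatorname{H}^0(\Lclass)=\{\inte\}$ (as $\mathbb{A}^1$ is affine, but computed via its smooth proper model $\mathbb{P}^1$ which contributes $\{\inte\}$ in degrees $0$ and $2$) one checks that multiplication by $\Lclass$ corresponds to the shift $t^2$ on the level of cohomology, i.e. $\operatorname{H}^{k}(\Lclass\cdot\{Y\})=\operatorname{H}^{k-2}(\{Y\})$; equivalently $\operatorname{H}^*(\Lclass^{-1}\{Y\})=t^{-2}\operatorname{H}^*(\{Y\})$. This is precisely the rule $\operatorname{H}^k(\nicefrac{\{X\}}{\Lclass^m})=\{\Hom{k+2m}{X}{\inte}\}$ and forces the target to be the Laurent series ring $\Lo{\Ab}((t))$, accommodating the negative powers of $t$ that the shift produces. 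Extending to the localization is then formal once compatibility of the shift with the relations is confirmed.

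The final step, passing to the completion, is where continuity enters and is the main obstacle. I would equip $\Lo{\Ab}$ with the discrete topology and argue that if $\nicefrac{\{X\}}{\Lclass^i}\in Fil^n$, meaning $\dim X - i\le n$, then $\operatorname{H}^k$ of this element vanishes for $k$ sufficiently negative relative to $n$, since $\Hom{k+2i}{X}{\inte}=0$ once $k+2i>2\dim X$, i.e. $k>2(\dim X - i)\le 2n$. Thus for each fixed $k$ the map $\operatorname{H}^k$ kills a deep enough piece of the filtration, so it is continuous for the discrete topology on the target and extends uniquely to the completion $\grotcom{\Var{\field}}$ by sending a Cauchy sequence to the eventually-constant value of its image. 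The delicate point I would watch is coherence: I must confirm that the bound is uniform enough that $\operatorname{H}^k$ of any element of $\grotcom{\Var{\field}}$ is a well-defined single class, independent of the representative sequence, which follows from the vanishing estimate above together with the fact that $\Lo{\Ab}$ is discrete so that convergent sequences are eventually constant.
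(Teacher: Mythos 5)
Your proposal is correct and takes essentially the same route as the paper: both reduce well-definedness to Bittner's blow-up relation via the degree-by-degree identity $\{\operatorname{H}^k(Bl_Y(X))\}+\{\operatorname{H}^k(Y)\}=\{\operatorname{H}^k(X)\}+\{\operatorname{H}^k(E)\}$, and then extend through the localization by the $t^2$-shift and to the completion $\grotcom{\Var{\field}}$ by continuity with respect to the dimension filtration (the paper proving the blow-up cohomology formula by hand via the projection formula, Leray--Hirsch and a Gysin diagram chase, where you cite it as classical, while you flesh out the extension steps the paper compresses into one sentence). Two harmless slips to fix in your write-up: $\operatorname{H}^*(\Lclass)=\{\inte\}\,t^{2}$, not $\{\inte\}$ in degree $0$ (use $\Lclass=\{\mathbb{P}^1\}-\{*\}$); and $\operatorname{H}^k$ kills $Fil^{n}$ once $2n<k$, i.e.\ the vanishing is for $k$ sufficiently \emph{positive} relative to $2n$ --- which is exactly what your final conclusion (``$\operatorname{H}^k$ kills a deep enough piece of the filtration'') correctly uses.
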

\begin{proof}
  The proof is given by Ekedahl in \cite{EkedahlStack} via Proposition 3.2.\textbf{i)}, \textbf{ii)} and Proposition 3.3.\textbf{ii)}.
  We give an alternative proof.

  We first prove that the map $\operatorname{H}^k:\grot{\Var{\field}} \rightarrow  \Lo{\Ab}$ is well defined.
  We know that $\grot{\Var{\field}}$ is generated by the class of smooth and proper varieties modulo the relations $\{X\}+\{E\}=\{\tilde{X}\}+\{Y\}$ with $\tilde{X}$ being the blow up of $X$ along $Y$ (smooth subvariety of codimension $d$) with exceptional divisor $E$ (note that $E$ is also smooth because it is a projective bundle over $Y$, $r=\pi|_{E}:E\rightarrow Y$):
  \begin{diagram}
    \tilde{X}       & \rTo^{\pi}  & X\\
      \uInto_{h}    &             & \uInto^{j}\\
	E           &   \rTo^{r}    	 & Y.
  \end{diagram}
  Moreover, by the Leray-Hirsch Theorem (see for instance \cite{HatcherTopology}),
  \[
    \Homred{k}{E}\cong \Homred{k}{Y}\oplus \Homred{k-2}{Y}\oplus \dots \oplus \Homred{k-2(d-1)}{Y}.
  \]
  
  We want to show that $\{\operatorname{H}^k(\tilde{X})\}+\{\Homred{k}{Y}\}=\{\Homred{k}{E}\}+\{\Homred{k}{X}\}$ and therefore it is enough to show that $\operatorname{H}^k(\tilde{X})\cong\Homred{k}{X}\oplus \Homred{k-2}{Y}\oplus \dots \oplus \Homred{k-2(d-1)}{Y}$.
  
  Firstly we observe that the pushforward of the fundamental class of $\tilde{X}$ is the fundamental class of $X$, $\pi_*[\tilde{X}]=[X]$ (see \cite{FultonIntersection}).
  Now, $1$ is the dual of $[\tilde{X}]$ and, respectively, of $[X]$, $\pi_* 1 = 1$.
  Using this and the projection formula one gets that for every $y$ in $\Homred{k}{X}$ $\pi_*(1\cdot \pi^* y)=\pi_*(1)\cdot y$, that is $\pi_*\pi^* y= y$ and so $\pi_*\pi^*=\operatorname{id}_{\Homred{k}{X}}$.
  Therefore, $\pi_*:\operatorname{H}^k(\tilde{X}) \rightarrow \Homred{k}{X}$ is surjective and one constructs the isomorphism $\operatorname{H}^k(\tilde{X})=\Homred{k}{X}\oplus \operatorname{ker}(\pi_*)$ sending $x$ in $\operatorname{H}^k(\tilde{X})$ into $(\pi_*x,x-\pi^*\pi_*x)$.
  
  Calling $U=X\setminus Y$ we also have the following commutative diagram:
  \begin{diagram}
    \dots & \rTo  & \operatorname{H}^{k-1}(\tilde{X}) & \rTo  & \Homred{k-1}{U} & \rTo  & \Homred{k-2}{E} & \rTo^{h_*}  & \operatorname{H}^k(\tilde{X}) & \rTo  & \Homred{k}{U} & \rTo  &\dots\\
     & & \dTo_{\pi_*} & & \dTo_{\operatorname{id}} & & \dTo_{r_*} & & \dTo_{\pi_*} & & \dTo_{\operatorname{id}} &  &\\
    \dots & \rTo  & \Homred{k-1}{X} & \rTo  & \Homred{k-1}{U} & \rTo  & \Homred{k-2}{Y} & \rTo  & \Homred{k}{X} & \rTo^{j_*}  & \Homred{k}{U} & \rTo  &\dots
  \end{diagram}
 
  Firstly we observe that $h_*:\operatorname{ker}(r_*)\rightarrow \operatorname{ker}(\pi_*)$ is an isomorphism. 
  Indeed let $x$ be in $\operatorname{ker}(\pi_*)$. 
  Since $\pi_*x=0$, then $j_*\pi_*x=0$, but the diagram commutes and $x$ is also the kernel of $\operatorname{H}^k(\tilde{X}) \rightarrow \Homred{k}{U}$ and hence, there exists $\alpha$ in $\Homred{k-2}{E}$ mapping to $x$. 
  It is easy to see that $\alpha$ belongs to $\operatorname{ker}(r_*)$. 
  Thus the map is surjective.
  
  It is also injective because if $h_*x=0$ then there exist $\beta$ in $\Homred{k-1}{U}$ mapping to $x$. 
  The diagrams commutes and so in the second lines, $\beta$ maps to zero and, hence, there exists $z$ in $\Homred{k-1}{X}$ mapping to $\beta$. 
  The map $\pi_*$ is surjective and so there exists $z'$ in $\operatorname{H}^{k-1}(\tilde{X})$ mapping to $\beta$ in the first rows. 
  Thus $\beta$ has to map to zero and so $x=0$.
  Finally we observe that $\operatorname{ker}(r_*)$ is exactly $\Homred{k-2}{Y}\oplus \dots \oplus \Homred{k-2(d-1)}{Y}$.
  This shows that $\operatorname{H}^k:\grot{\Var{\field}} \rightarrow  \Lo{\Ab}$ is well defined.
  
  Finally, one extends this map, first to the Motivic ring and then to $\grotcom{\Var{\field}}$.
\end{proof}

% Without confusion we denote by $1$, the class of a point $\{*\}$ in $\grotcom{\Var{\field}}$ and we also denote by $1=\{\inte\}\in \Lo{\Ab}$. 
% %
% With this notation, $\Homred{*}{1}=1$.
% %
% \noindent
% We finally state the main definition.
%
\begin{defi}\label{defi-ekedahl-invariants-stacky}
%   Let $G$ be a finite group. 
%   %
%   Let $\field$ be a field.
%   %
%   Let $\{\B{G}\}$ be the class of the classifying stack of $G$ in $\grot{\Stck{\field}}$.
  %
  The $i$-th Ekedahl invariant of a finite group $G$ is 
  \[
    \eke{i}{G}=\Homred{-i}{\{\B{G}\}}\in \Lo{\Ab}.
  \]
  We say that $\eke{i}{G}$ are \emph{trivial} if $\eke{i}{G}=0$ for $i\neq 0$.
\end{defi}

\section{A non-stacky definition}\label{sec-nonstaky}
In this section we present an equivalent definition for these invariants that does not involve the concept of algebraic stacks.

Let $V$ be a $n$-dimensional faithful $\field$-representation of a finite group $G$.
The group $G$ acts component-wise on $V^m$.
Consider the quotient scheme $\nicefrac{V^m}{G}$ that is usually a singular scheme. 
\begin{pro}\label{pro-coh-stabilizes}
  For $m$ large enough, the \emph{cohomology} $\Homred{-k}{\{\nicefrac{V^{m}}{G}\} \Lclass^{-mn}}$ stabilizes.
\end{pro}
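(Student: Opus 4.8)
The plan is to obtain the stabilization as a formal consequence of two facts already established: the convergence $\{\nicefrac{V^m}{G}\}\Lclass^{-mn}\to \cl$ in $\grotcom{\Var{\field}}$ proved in Proposition \ref{prop-property-BG}.\textbf{b)}, and the continuity of the cohomological homomorphism $\operatorname{H}^{-k}$ from Theorem \ref{thm-cohomological-map-stack}. A continuous homomorphism into the discrete group $\Lo{\Ab}$ sends a convergent sequence to an eventually constant one, so $\operatorname{H}^{-k}\left(\{\nicefrac{V^m}{G}\}\Lclass^{-mn}\right)$ is eventually constant, with stable value $\operatorname{H}^{-k}(\cl)=\eke{k}{G}$; this is exactly the asserted stabilization.

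First I would make the rate of convergence explicit by reusing the computation inside the proof of Proposition \ref{prop-property-BG}.\textbf{b)}. There one finds
\[
    \cl-\{\nicefrac{V^m}{G}\}\Lclass^{-mn}=\left(\{\mathcal{Z}\}-\{\nicefrac{M}{G}\}\right)\Lclass^{-mn},
\]
together with the fact that $\nicefrac{M}{G}$ and $\mathcal{Z}$ have codimension $\lfloor\nicefrac{m}{n}\rfloor$, so that both $\{\nicefrac{M}{G}\}\Lclass^{-mn}$ and $\{\mathcal{Z}\}\Lclass^{-mn}$ lie in $Fil^{-\lfloor\nicefrac{m}{n}\rfloor}$. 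As the latter is a subgroup, the whole difference $\cl-\{\nicefrac{V^m}{G}\}\Lclass^{-mn}$ lies in $Fil^{-\lfloor\nicefrac{m}{n}\rfloor}$, and the index $-\lfloor\nicefrac{m}{n}\rfloor$ tends to $-\infty$ with $m$.

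Next I would feed this into continuity. By Theorem \ref{thm-cohomological-map-stack} the map $\operatorname{H}^{-k}\colon\grotcom{\Var{\field}}\to\Lo{\Ab}$ is a continuous group homomorphism into a discrete group, so $\ker\operatorname{H}^{-k}$ is an open subgroup and therefore contains some $Fil^{N_0}$. For every $m$ with $\lfloor\nicefrac{m}{n}\rfloor\geq -N_0$ one then has $Fil^{-\lfloor\nicefrac{m}{n}\rfloor}\subseteq Fil^{N_0}\subseteq\ker\operatorname{H}^{-k}$, and applying $\operatorname{H}^{-k}$ to the displayed difference gives
\[
    \operatorname{H}^{-k}\left(\{\nicefrac{V^m}{G}\}\Lclass^{-mn}\right)=\operatorname{H}^{-k}(\cl).
\]
This is independent of $m$ for all such $m$, which is the claim.

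The only substantive ingredient is the openness of $\ker\operatorname{H}^{-k}$, i.e. the continuity supplied by Theorem \ref{thm-cohomological-map-stack}; the rest is bookkeeping with the filtration, so I regard this as the main point rather than a genuine obstacle. If a self-contained threshold is preferred, I would produce $N_0$ by hand: a generator $\nicefrac{\{X\}}{\Lclass^{i}}\in Fil^{N}$ with $X$ smooth and proper satisfies $i\geq\dim X-N$, hence the cohomological degree $2i-k\geq 2\dim X-2N-k$ exceeds $2\dim X$ as soon as $N<-\nicefrac{k}{2}$, forcing $\Hom{2i-k}{X}{\inte}=0$ and so $\operatorname{H}^{-k}$ to vanish on $Fil^{N}$. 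This yields the explicit threshold $\lfloor\nicefrac{m}{n}\rfloor>\nicefrac{k}{2}$, i.e. stabilization once $m$ exceeds roughly $\nicefrac{nk}{2}$.
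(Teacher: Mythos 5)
Your proof is correct and takes essentially the same route as the paper's: the convergence $\{\nicefrac{V^m}{G}\}\Lclass^{-mn}\to\cl$ from Proposition \ref{prop-property-BG}.\textbf{b)}, together with the continuity of $\operatorname{H}^{-k}$ from Theorem \ref{thm-cohomological-map-stack} and the discreteness of $\Lo{\Ab}$, forces the sequence of cohomologies to be eventually constant with value $\Homred{-k}{\cl}$. The explicit threshold $\lfloor\nicefrac{m}{n}\rfloor>\nicefrac{k}{2}$ you extract is a quantitative refinement the paper does not record, but the underlying argument is identical.
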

\begin{proof}
  We have seen in Proposition \ref{prop-property-BG}.\textbf{b)} that 
  $$\cl=\lim_{m\rightarrow \infty} \{\nicefrac{V^{m}}{G}\} \Lclass^{-mn}\in \grotcom{\Var{\field}}.$$
  From Theorem \ref{thm-cohomological-map-stack}, the map $\operatorname{H}^k$ is continuous and the topology of $\Lo{\Ab}$ is discrete, so for \emph{$m$ large enough}, $\Homred{-k}{\{\B{G}\}}=\Homred{-k}{\{\nicefrac{V^{m}}{G}\} \Lclass^{-mn}}$.
%   \[
%     \Homred{-i}{\{\B{G}\}}=\Hom{-i}{\{\nicefrac{V^{m}}{G}\} \Lclass^{-mn}}{\inte}.
%   \]
\end{proof}
\noindent
We set $m(i,V)$ to be the positive integer where this cohomology stabilizes.

Let $X$ be a smooth and proper resolution of $\nicefrac{V^m}{G}$:
\[
  X\xrightarrow{\,\,\,\,\,\,\,\pi\,\,\,\,\,\,\,} \nicefrac{V^m}{G}.
\]

\begin{defi}\label{defi-ekedahl-invariants-non-stacky}
  Let $V$, $G$, $m(i,V)$ and $X$ defined as before.
  The $i$-th Ekedahl invariant is defined as follows:
  \[
    \eke{i}{G}=\{\Hom{2m(i,V)-i}{X}{\inte}\}+\sum_j n_j\{\Hom{2m(i,V)-i}{X_j}{\inte}\} \in \Lo{\Ab},
  \]
  where $\{\nicefrac{V^{m(i,V)}}{G}\}\in \grot{\Var{\field}}$ is written as the sum of classes of smooth and proper varieties $\{X\}$ and $\{X_j\}$, $\{\nicefrac{V^{m(i,V)}}{G}\}=\{X\}+\sum_j n_j\{X_j\}$. 
\end{defi}
\noindent
This definition does not involve the theory of algebraic stacks, but furthermore it gives a concrete way to compute these invariants.

Now, we show that the two given definitions of Ekedahl invariants are equivalent.
Moreover, by proving this, we prove that the new definition is independent by the choice of the faithful representation $V$ and of the smooth and proper resolution $X$.

\begin{pro}\label{pro-defi-equivalent}
  Definition \ref{defi-ekedahl-invariants-non-stacky} is equivalent to Definition \ref{defi-ekedahl-invariants-stacky}.
\end{pro}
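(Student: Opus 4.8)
The plan is to read off both definitions as two evaluations of the single continuous homomorphism $\operatorname{H}^{-i}$ on the element $\cl$ of $\grotcom{\Var{\field}}$, using the already-established comparison between $\cl$ and the class of the quotient scheme. I would begin from the stacky Definition \ref{defi-ekedahl-invariants-stacky}, $\eke{i}{G} = \Homred{-i}{\cl}$. By Proposition \ref{prop-property-BG}.\textbf{b)} the image of $\cl$ in $\grotcom{\Var{\field}}$ is $\lim_{m\to\infty}\{\nicefrac{V^m}{G}\}\Lclass^{-mn}$, and by Proposition \ref{pro-coh-stabilizes} the value $\Homred{-i}{\{\nicefrac{V^m}{G}\}\Lclass^{-mn}}$ is already constant and equal to $\Homred{-i}{\cl}$ once $m \geq m(i,V)$; this is exactly where continuity of $\operatorname{H}^{-i}$ and discreteness of $\Lo{\Ab}$ (Theorem \ref{thm-cohomological-map-stack}) are used, and it removes the limit. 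So for $m = m(i,V)$ I obtain the exact identity $\eke{i}{G} = \Homred{-i}{\{\nicefrac{V^m}{G}\}\Lclass^{-mn}}$, reducing everything to a computation on the class of an honest (singular) variety.

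The second step computes that right-hand side explicitly. Writing, via the Bittner presentation, $\{\nicefrac{V^m}{G}\} = \{X\} + \sum_j n_j\{X_j\}$ with $X$ the chosen smooth and proper resolution and each $X_j$ smooth and proper, and multiplying through by $\Lclass^{-mn}$, I use that $\operatorname{H}^{-i}\colon \grotcom{\Var{\field}} \to \Lo{\Ab}$ is a group homomorphism to split the value over the finite sum:
\[
\Homred{-i}{\{\nicefrac{V^m}{G}\}\Lclass^{-mn}} = \Homred{-i}{\{X\}\Lclass^{-mn}} + \sum_j n_j\, \Homred{-i}{\{X_j\}\Lclass^{-mn}}.
\]
On each smooth and proper summand the defining formula of the map gives $\operatorname{H}^{-i}(\{Y\}\Lclass^{-mn}) = \{\Hom{2mn-i}{Y}{\inte}\}$, so the right-hand side becomes $\{\Hom{2mn-i}{X}{\inte}\} + \sum_j n_j\{\Hom{2mn-i}{X_j}{\inte}\}$, the cohomological degree being twice the power $mn$ of $\Lclass$ shifted by $-i$. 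This is precisely the quantity assigned by Definition \ref{defi-ekedahl-invariants-non-stacky}, and combining the two steps yields the asserted equality of the two definitions.

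Finally, the same chain of equalities simultaneously settles the well-definedness of Definition \ref{defi-ekedahl-invariants-non-stacky}: its output has been shown to equal $\Homred{-i}{\cl}$, which depends only on $G$ and $i$. Hence it is independent of the faithful representation $V$, of the stabilization threshold $m$ (any $m \geq m(i,V)$ serves, since the cohomology has stabilized), of the resolution $X$, and of the particular decomposition into smooth and proper classes. I expect the only genuinely delicate inputs to be the two already in hand: that $\operatorname{H}^{-i}$ is an honest group homomorphism on $\grotcom{\Var{\field}}$ — so that it distributes over the Grothendieck-group decomposition and assigns no privileged role to the distinguished piece $X$ — and that the cohomology truly stabilizes at a finite stage. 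Both are furnished by Theorem \ref{thm-cohomological-map-stack} and Proposition \ref{pro-coh-stabilizes}, so beyond invoking them the argument is bookkeeping of degrees; the main obstacle is conceptual rather than computational, namely recognizing that the stacky invariant and the explicit integral cohomology of a resolution are two readings of $\operatorname{H}^{-i}$ applied to one and the same element of $\grotcom{\Var{\field}}$.
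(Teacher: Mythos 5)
Your proposal is correct and follows essentially the same route as the paper's own proof: stabilization via Proposition \ref{pro-coh-stabilizes} (resting on continuity of $\operatorname{H}^{-i}$ and discreteness of $\Lo{\Ab}$ from Theorem \ref{thm-cohomological-map-stack}) to replace $\Homred{-i}{\cl}$ by $\Homred{-i}{\{\nicefrac{V^m}{G}\}\Lclass^{-mn}}$ at $m=m(i,V)$, then the Bittner decomposition $\{\nicefrac{V^m}{G}\}=\{X\}+\sum_j n_j\{X_j\}$, additivity of $\operatorname{H}^{-i}$, and the degree shift $2mn$ coming from multiplication by $\Lclass^{-mn}$. Your closing remark that the chain of equalities also yields independence of $V$, $X$, and the decomposition is exactly the observation the paper makes just before stating the proposition.
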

\begin{proof}
  For $m=m(i,V)$, 
  \[
    \Homred{-i}{\{\B{G}\}}=\Hom{-i}{\{\nicefrac{V^{m}}{G}\} \Lclass^{-mn}}{\inte}=\Hom{2mn-i}{\{\nicefrac{V^{m}}{G}\}}{\inte},
  \]
  %$\Homred{-i}{\{\B{G}\}}=\{\Hom{2mn-i}{\{\nicefrac{V^{m}}{G}\}}{\inte}\}$, 
  where the shifting $2mn$ comes from the multiplication for $\Lclass^{-mn}$.

  By definition $X$ is a proper resolution of the singularities of $\nicefrac{V^{m}}{G}$.
  For the Bittner results showed in Section \ref{sec-Preliminaries}, we write $\{\nicefrac{V^{m}}{G}\}$ as a suitable sum $\{X\}+\sum_j n_j\{X_j\}$ where $\{X\}$ is smooth, proper and birational to $\nicefrac{V^{m}}{G}$, the $X_j$'s are smooth and proper with dimension strictly less then $\operatorname{dim}(\nicefrac{V^{m}}{G})=mn$ and $n_j\in \inte$.
  Therefore,
  \[
    \eke{i}{G}=\Homred{-i}{\{\B{G}\}}=\{\Hom{2mn-i}{X}{\inte}\}+\sum_j n_j\{\Hom{2mn-i}{X_j}{\inte}\}.
  \]
\end{proof}

\subsection{The state of the art}\label{sec-literature}
The following theorem links to the Noether problem.

\begin{teo}[Theorem 5.1 in \cite{Ekedahl-inv}]\label{thm-ekedahl}
  We denote by $B_0(G)^{\vee}$ the pontryagin dual of the Bogomolov multiplier of the group $G$.
  If $G$ is a finite group, %and if $\field$ is an algebraically closed field of characteristic zero, 
  then
  \begin{description}
    \item[a)] $\eke{i}{G}=0$, for $i<0$;
    \item[b)] $\eke{0}{G}=\{\mathbb{Z}\}$;
    \item[c)] $\eke{1}{G}=0$;
    \item[d)] $\eke{2}{G}=\{B_0(G)^{\vee}\}+\alpha\{\inte\}$ for some integer $\alpha$.
%     \item[d)] $\eke{2}{G}=\{B_0(G)^{\vee}\}$, where $B_0(G)^{\vee}$ is the dual of the Bogomolov multiplier of the group $G$;
%     \item[e)] For $i>0$, $\eke{i}{G}$ is the sum (with signs) of classes of finite groups in $\Lo{\Ab}$.
  \end{description}
\end{teo}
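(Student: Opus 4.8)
The plan is to compute the Ekedahl invariants $\eke{i}{G}$ directly from the non-stacky Definition \ref{defi-ekedahl-invariants-non-stacky}, which expresses them in terms of the integral cohomology groups $\Hom{2mn-i}{X}{\inte}$ of a smooth proper resolution $X$ of $\nicefrac{V^m}{G}$ (together with correction terms from the lower-dimensional $X_j$). The key geometric input is that $\nicefrac{V^m}{G}$ is a rational variety: since $V^m$ is an affine space on which the finite group $G$ acts with a dense free orbit (faithfulness), the quotient $\nicefrac{V^m}{G}$ is unirational, hence rational in characteristic zero for $mn$ large, so its smooth proper model $X$ has the low-degree cohomology of rational $mn$-folds. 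Concretely, I would use that for a smooth proper rational variety of dimension $d=mn$ one has $\Hom{0}{X}{\inte}=\inte$, $\Hom{1}{X}{\inte}=0$, and $\Hom{2}{X}{\inte}$ torsion-free, while the \emph{torsion} in $\Hom{3}{X}{\inte}$ is exactly the Artin–Mumford / Bogomolov obstruction $B_0(G)$ (via its Pontryagin dual). The shift $i\mapsto 2mn-i$ means that items (a)--(d) correspond to the top-degree cohomology groups $\Hom{2mn-i}{X}{\inte}$ for $i=\text{(negative)},0,1,2$, so after Poincaré duality on the $d$-fold $X$ these become the \emph{lowest}-degree groups, which is why rationality controls them.

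First I would fix $m=m(i,V)$ large (in particular large enough that $mn > 3$, so the degrees in question are genuinely in the stable range and the $X_j$ contributions vanish by the dimension bound $\dim X_j < mn$ in Proposition \ref{pro-defi-equivalent}). Next, for each of the four degrees I translate through Poincaré duality $\Hom{2mn-i}{X}{\inte}\cong H_{i}(X;\inte)$ (up to torsion-linking subtleties) and read off:
\begin{description}
  \item[a)] For $i<0$ the relevant cohomological degree exceeds $2mn=2\dim X$, so $\Hom{2mn-i}{X}{\inte}=0$ and likewise for the $X_j$; hence $\eke{i}{G}=0$.
  \item[b)] For $i=0$, $\Hom{2mn}{X}{\inte}=\{\inte\}$ is the top cohomology of the connected proper $X$, and the $X_j$ contribute nothing (their top degree is strictly smaller), giving $\eke{0}{G}=\{\inte\}$.
  \item[c)] For $i=1$, $\Hom{2mn-1}{X}{\inte}\cong H_1(X;\inte)=0$ because $X$ is rational and simply connected in the relevant sense (no odd first homology), yielding $\eke{1}{G}=0$.
  \item[d)] For $i=2$, $\Hom{2mn-2}{X}{\inte}$ splits as a free part $\alpha\{\inte\}$ (the even second Betti number) plus a torsion part identified with $B_0(G)^{\vee}$.
\end{description}

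The main obstacle is item (d): identifying the torsion subgroup of $\Hom{3}{X}{\inte}$ (equivalently, by duality, of $\Hom{2mn-2}{X}{\inte}$) with the Pontryagin dual of the Bogomolov multiplier $B_0(G)$. This is precisely the Artin–Mumford argument, recast in terms of the unramified Brauer group of the function field of $\nicefrac{V^m}{G}$, and Bogomolov's formula $B_0(G)=\bigcap_A \ker(\Hom{2}{G}{\com^*}\to \Hom{2}{A}{\com^*})$ quoted in Section \ref{sec-Preliminaries} is what pins down this torsion group intrinsically in terms of $G$. I would invoke the comparison between the unramified cohomology of the resolution $X$ and group cohomology of $G$, using that $X$ is birational to $\nicefrac{V^m}{G}$ and that unramified invariants are birational invariants; the free rank $\alpha$ is then simply whatever even Betti number survives and is left undetermined (which is why the theorem states it only up to $\alpha\{\inte\}$, whereas Ekedahl's sharper result $\eke{2}{G}=\{B_0(G)^{\vee}\}$ requires the more delicate stacky computation showing $\alpha=0$).
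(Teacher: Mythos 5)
Your overall architecture (compute via Definition \ref{defi-ekedahl-invariants-non-stacky}, kill the $X_j$ and high-degree terms by dimension reasons for items \textbf{a)} and \textbf{b)}, then use Poincar\'e duality plus Artin--Mumford/Bogomolov for items \textbf{c)} and \textbf{d)}) matches the paper's proof. But your declared ``key geometric input'' contains a genuine error: you assert that $\nicefrac{V^m}{G}$ is unirational, \emph{hence rational} in characteristic zero for $mn$ large. Unirationality does not imply rationality in dimension $\geq 3$ (the L\"uroth problem has a negative answer: Clemens--Griffiths, Iskovskikh--Manin, and --- most relevantly here --- Artin--Mumford), and no amount of enlarging $m$ repairs this: the Saltman and Bogomolov groups are precisely examples where $\nicefrac{V^m}{G}$ is unirational but \emph{not} rational, indeed not even stably rational. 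Worse, your claim is inconsistent with the very statement you are proving: if $X$ were rational, then $\operatorname{tor}\left(\Hom{3}{X}{\inte}\right)$, being a birational invariant that vanishes for $\mathbb{P}^{mn}_{\field}$, would be zero, forcing $B_0(G)=0$ for every finite group $G$; this would empty item \textbf{d)} of content and contradict Corollary \ref{prop-NP} (non-triviality for the Saltman and Bogomolov groups). You cannot simultaneously have $X$ rational and $\operatorname{tor}\left(\Hom{3}{X}{\inte}\right)\cong B_0(G)\neq 0$.

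The repair is exactly what the paper does: use \emph{only} unirationality. The inclusion $\field(X)\simeq \field(\nicefrac{V^m}{G})\subseteq \field(V^m)$ makes $X$ unirational; by Serre's theorem a smooth proper unirational variety over $\field$ is simply connected, so $\operatorname{H}_1(X;\inte)=0$ and Poincar\'e duality gives item \textbf{c)} --- your parenthetical ``rational and simply connected'' gets the right conclusion from the wrong premise. For item \textbf{d)}, one does not need to compare $X$ with projective space at all: one only needs that $\operatorname{tor}\left(\Hom{3}{X}{\inte}\right)$ is a birational invariant (Artin--Mumford) together with Bogomolov's theorem that for smooth proper models of $\nicefrac{V}{G}$ this invariant \emph{is} $B_0(G)$; birational invariance transports the computation between resolutions, with the free part left undetermined as $\alpha\{\inte\}$. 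Your items \textbf{a)} and \textbf{b)} are correct as written and agree with the paper.
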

\begin{proof}
  By Definition \ref{defi-ekedahl-invariants-non-stacky} and setting for simplicity $m=m(i,V)$, 
  \[
    \eke{i}{G}=\{\Hom{2mn-i}{X}{\inte}\}+\sum_j n_j\{\Hom{2mn-i}{X_j}{\inte}\},
  \]
  where $X$ is a smooth and proper resolution of $\nicefrac{V^m}{G}$; $V$ is a $n$-dimensional faithful $\field$-representation of a finite group $G$ and $\{\nicefrac{V^m}{G}\}$ is written in $\grotcom{\Var{\field}}$ as the sum of classes of smooth and proper varieties $\{X_j\}$, $\{\nicefrac{V^m}{G}\}=\{X\}+\sum_j n_j\{X_j\}$. 
  
  Let $i=0$. The only surviving cohomology is $\Hom{2mn}{X}{\inte}=\inte$, because $\operatorname{dim}(X_j)<\operatorname{dim}(\nicefrac{V^{m}}{G})=mn$. Thus, $\eke{0}{G}=\{\Hom{2mn}{X}{\inte}\}=\{\inte\}$.
        
  If $i=1$, for similar reasons, $\eke{1}{G}=\{\Hom{2mn-1}{X}{\inte}\}$.
  Since $X$ is birational to $\nicefrac{V^{m}}{G}$, one has the inclusion $\field(X)\simeq \field(\nicefrac{V}{G})\subseteq \field(V)$ and hence $X$ is unirational and, therefore, simply connected. 
  Thus, using the result of Serre in \cite{Serre1959}, $\Hom{2mn-1}{X}{\inte}\simeq\operatorname{H}_1(X; \inte)=0$ and thus $\eke{1}{G}=0$.
  
  Regarding $\eke{2}{G}$, one firstly observes, by Poincar\'e duality, that
  \[
    \operatorname{tor}(\Hom{2mn-2}{X}{\inte})\cong \operatorname{tor}(\operatorname{H}_3(X; \inte)).
  \]
  Artin and Mumford  have proved in \cite{ArtinMumford1972} that $\operatorname{tor}(\Hom{3}{X}{\inte})$ is a birational invariant and Bogomolov in \cite{Bogomolov1988} proved that this is exactly $B_0(G)$.
  Therefore, we have proved that $\eke{2}{G}=\{B_0(G)^{\vee}\}+\alpha\{\inte\}$ for some integer $\alpha$.
%   Up to now, we have proved that $\eke{2}{G}=\{B_0(G)^{\vee}\}+\alpha\{\inte\}$ for an integer $\alpha$. 
%   %
%   The point \textbf{e)} of the statement also implies that $\alpha=0$.
%   %
%   We refer to Theorem 5.1 in \cite{Ekedahl-inv} for the point \textbf{e)}.
\end{proof}

  \begin{Obs}
    Bogomolov proved in Theorem 1.1 of \cite{Bogomolov1988} that if $X$ is smooth, proper and unirational the Brauer group $Br_v(\mathbb{K})$ is isomorphic to $\operatorname{tor}(\Hom{3}{X}{\inte})$, with $\mathbb{K}=\field(X)$.
    Moreover he defined $Br_v(G)=Br_v(\field(X))$, where $X$ is smooth, proper and birational to $\nicefrac{V}{G}$ with $V$ being any generically free representation of $G$. % and so $Br_v(G)=Br_v(\nicefrac{V^m}{G})$.
    Thus, in Theorem 3.1 of \cite{Bogomolov1988}, he has proved that $Br_v(G)=B_0(G)$.
  \end{Obs}

\noindent
Using these results, one finds the first examples of group with non trivial Ekedahl invariants.
\begin{cor}[Non triviality]\label{prop-NP}
  The second Ekedahl invariant is non trivial for every algebraically closed field $\field$ with $\operatorname{char}(\field)=0$ and for the groups of order $p^9$ given by Saltman in \cite{Saltman1984}, of order $p^6$ given by Bogomolov in \cite{Bogomolov1988} and the group of order $p^5$ in the the isoclinism family $\phi_{10}$ (see \cite{HKM-noether}).
  Moreover in these cases, $\{BG\}\neq \{*\}$ in $\grotcom{\Var{\field}}$.
\end{cor}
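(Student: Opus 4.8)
The plan is to deduce everything from part \textbf{d)} of Theorem \ref{thm-ekedahl} together with the known non-vanishing of the Bogomolov multiplier for the groups in question. First I would recall that the groups of order $p^9$ of Saltman \cite{Saltman1984}, of order $p^6$ of Bogomolov \cite{Bogomolov1988}, and of order $p^5$ in the isoclinism family $\phi_{10}$ classified in \cite{HKM-noether} are exactly the examples for which $B_0(G)\neq 0$; this is the very content of those papers and is the obstruction they produce to the Noether problem. Since $B_0(G)$ is by definition a subgroup of the finite group $\Hom{2}{G}{\com^*}$, it is a finite abelian group, so its Pontryagin dual $B_0(G)^{\vee}$ is a finite abelian group abstractly isomorphic to $B_0(G)$; in particular $B_0(G)^{\vee}\neq 0$, independently of the choice of algebraically closed $\field$ of characteristic zero.

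Next I would analyse the class $\eke{2}{G}=\{B_0(G)^{\vee}\}+\alpha\{\inte\}$ supplied by Theorem \ref{thm-ekedahl}.\textbf{d)} inside $\Lo{\Ab}$. The key structural remark is that $\Lo{\Ab}$ is the free abelian group on the isomorphism classes of indecomposable finitely generated abelian groups, namely on $\{\inte\}$ and on the classes $\{\inte/q^r\}$ for prime powers $q^r$. Decomposing $B_0(G)^{\vee}$ into cyclic $q$-groups expresses $\{B_0(G)^{\vee}\}$ as a nonzero combination of the torsion generators $\{\inte/q^r\}$, whereas $\alpha\{\inte\}$ involves only the free generator $\{\inte\}$. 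As these generators are independent in $\Lo{\Ab}$, no value of $\alpha$ can cancel the torsion part, so $\eke{2}{G}\neq 0$. Since $2\neq 0$, the Ekedahl invariants of $G$ are nontrivial.

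For the final assertion I would argue by contradiction using the cohomological map of Theorem \ref{thm-cohomological-map-stack}. The point $*=\operatorname{Spec}(\field)$ is smooth, proper and zero-dimensional, whence $\Homred{0}{\{*\}}=\{\inte\}$ and $\Homred{k}{\{*\}}=0$ for $k\neq 0$; in other words $\Homred{-i}{\{*\}}$ produces only trivial invariants. If one had $\cl=\{*\}$ in $\grotcom{\Var{\field}}$, then applying the group homomorphism $\operatorname{H}^{-i}$ would give $\eke{i}{G}=\Homred{-i}{\cl}=\Homred{-i}{\{*\}}=0$ for every $i\neq 0$, contradicting the non-vanishing of $\eke{2}{G}$ just established. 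Hence $\cl\neq\{*\}$ in $\grotcom{\Var{\field}}$.

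I expect the only genuinely delicate point to be the bookkeeping in $\Lo{\Ab}$: one must verify that $\{B_0(G)^{\vee}\}$ really contributes a nonzero torsion component and that the free term $\alpha\{\inte\}$ cannot interfere with it. Everything else amounts to quoting the non-vanishing of $B_0(G)$ from the literature and feeding it through Theorem \ref{thm-ekedahl}.
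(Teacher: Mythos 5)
Your proposal is correct and follows essentially the same route as the paper: both deduce non-triviality from Theorem \ref{thm-ekedahl}.\textbf{d)} plus the finiteness and non-vanishing of $B_0(G)$ for these groups, the torsion class $\{B_0(G)^{\vee}\}$ being impossible to cancel against $\alpha\{\inte\}$ in $\Lo{\Ab}$. The paper compresses this into a single sentence (and leaves the deduction of $\cl\neq\{*\}$ implicit), whereas you supply the supporting details --- the free-generator structure of $\Lo{\Ab}$ and the contradiction via the continuous homomorphism $\operatorname{H}^{-i}$ applied to $\{*\}$ --- all of which are sound.
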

\begin{proof}
  The Bogomolov multiplier is always a finite group and so if $B_0(G)\neq 0$, then
  $\eke{2}{G}=\{B_0(G)^{\vee}\}+\alpha\{\inte\}\neq 0$.
\end{proof}

\noindent
For this reasons it is natural to ask the following questions:
\begin{Question} 
Does, for $i>2$, $\eke{i}{G}\neq 0$ imply a negative answer to the Noether problem? In other words, are the Ekedahl invariants obstructions to the rationality of the extension $\field(V)^G/\field$?
\end{Question}

Another connection between the Noether problem and the non-triviality of $\cl$ is also the next proposition.
\begin{pro}[Corollary 5.8 in \cite{Ekedahl-inv}]
  %For $\field=\mathbb{Q}$, 
  $\{\B{\nicefrac{\inte}{47\inte}}\}\neq \{*\}$ in $\grotcom{\Var{\rat}}$.
\end{pro}

\noindent
This leads to another natural question:

\begin{Question} 
Is $\cl \neq \{*\}$ an obstruction to the rationality of the extension $\field(V)^G/\field$?
\end{Question}

Regarding item \textbf{d)} of the previous theorem, Ekedahl actually proved a more precise statement.

\begin{teo}[Theorem 5.1 of \cite{Ekedahl-inv}]
  For $i>0$, $\eke{i}{G}$ is the sum (with signs) of classes of finite groups in $\Lo{\Ab}$.
\end{teo}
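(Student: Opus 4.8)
The plan is to isolate, inside each class $\eke{i}{G}\in\Lo{\Ab}$, the multiple of $\{\inte\}$ coming from free ranks and to show that this multiple vanishes for $i>0$; everything left over is then automatically a signed sum of classes of finite groups. To this end I introduce the group homomorphism $\operatorname{rk}\colon\Lo{\Ab}\to\inte$ sending $\{A\}$ to the free rank of the finitely generated abelian group $A$. It is well defined and additive because $\{A\oplus B\}=\{A\}+\{B\}$, and it annihilates every finite group. Since every finitely generated $A$ satisfies $\{A\}=\operatorname{rk}(A)\{\inte\}+\{A_{\mathrm{tor}}\}$ with $A_{\mathrm{tor}}$ finite, applying this decomposition term by term to Definition \ref{defi-ekedahl-invariants-non-stacky} gives
\[
  \eke{i}{G}=c_i\,\{\inte\}+\sum_k \epsilon_k\{F_k\},\qquad c_i:=\operatorname{rk}\big(\Homred{-i}{\cl}\big),
\]
where the $F_k$ are finite abelian groups (the torsion subgroups of the various $\Hom{2mn-i}{X}{\inte}$ and $\Hom{2mn-i}{X_j}{\inte}$) and the signs $\epsilon_k$ are the integers $n_j$. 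Thus it suffices to prove $c_i=0$ for every $i>0$.

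The integer $c_i$ is a \emph{virtual Betti number}: composing the additive map $\Homred{2mn-i}{-}$ of Theorem \ref{thm-cohomological-map-stack} with $\operatorname{rk}$ yields an additive invariant $e_{2mn-i}\colon\grot{\Var{\field}}\to\inte$ whose value $e_d(\{Y\})$ on a smooth proper $Y$ is the ordinary Betti number $b_d(Y)$, and by Proposition \ref{pro-defi-equivalent} one has $c_i=e_{2mn-i}(\{\nicefrac{V^m}{G}\})$ for $m=m(i,V)$. I would then compute this using the additive decomposition $\{\nicefrac{V^m}{G}\}=\{\nicefrac{U}{G}\}+\{\nicefrac{M}{G}\}$ furnished by the open locus $U$ and its complement $M$ introduced before Proposition \ref{prop-property-BG}. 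Since $\dim(\nicefrac{M}{G})=mn-k$ with $k=\lfloor m/n\rfloor$, and virtual Betti numbers vanish in degrees exceeding twice the dimension, for $m$ large (precisely $2k>i$) the summand $\nicefrac{M}{G}$ contributes nothing in degree $2mn-i$, so $c_i=e_{2mn-i}(\{\nicefrac{U}{G}\})$.

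It remains to show $e_{2mn-i}(\{\nicefrac{U}{G}\})=0$, and this is where the finiteness of $G$ enters decisively. The morphism $U\to\nicefrac{U}{G}$ is a $G$-torsor with $G$ finite, hence finite \'etale, so the transfer argument ($|G|$ invertible in $\rat$) identifies the rational compactly supported cohomology of $\nicefrac{U}{G}$ with the $G$-invariants of that of $U$, compatibly with weights; consequently $e_{2mn-i}(\{\nicefrac{U}{G}\})$ is the dimension of the $G$-invariant part of the corresponding graded piece for $U$. Finally $\{U\}=\Lclass^{mn}-\{M\}$ gives $e_{2mn-i}(\{U\})=e_{2mn-i}(\Lclass^{mn})-e_{2mn-i}(\{M\})$, and both terms vanish in degree $2mn-i$ for $0<i<2k$ (affine space contributes only in top degree $2mn$, while $M$ contributes nothing above degree $2(mn-k)$). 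Hence $e_{2mn-i}(\{\nicefrac{U}{G}\})=0$, so $c_i=0$ and the theorem follows. The main obstacle I expect is the weight-compatible transfer step identifying the virtual Betti numbers of $\nicefrac{U}{G}$ with $G$-invariants for $U$---the single point where $G$ being finite is indispensable---together with the bookkeeping of degree ranges ensuring that both $M$ and $\nicefrac{M}{G}$ drop out simultaneously for all fixed $i>0$ once $m$ is large.
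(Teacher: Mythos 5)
Your overall architecture is sound, and in fact the paper offers no proof to compare against: it simply defers to point \textbf{e)} of Theorem 5.1 in \cite{Ekedahl-inv}, so yours is a genuine attempt at a self-contained argument. The reduction is correct up to the last step: the rank homomorphism $\operatorname{rk}\colon\Lo{\Ab}\to\inte$ is well defined, the splitting $\eke{i}{G}=c_i\{\inte\}+\sum_k\epsilon_k\{F_k\}$ with $F_k$ finite is legitimate, the identification $c_i=e_{2mn-i}(\{\nicefrac{V^m}{G}\})$ for $m=m(i,V)$ follows from Propositions \ref{pro-coh-stabilizes} and \ref{pro-defi-equivalent}, and the contribution of $\nicefrac{M}{G}$ is killed by the dimension bound once $2k>i$.

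The gap is the final \emph{hence}: you deduce $e_{2mn-i}(\{\nicefrac{U}{G}\})=0$ from $e_{2mn-i}(\{U\})=0$ plus the transfer isomorphism. This inference is not valid. For a variety $Y$ that is not smooth and proper, $e_d(\{Y\})$ is not the dimension of a single graded piece; by the weight formalism it equals the alternating sum $\sum_j(-1)^{j+d}\dim\operatorname{gr}^W_d H^j_c(Y;\rat)$ over all cohomological degrees $j$. The scissor relation $\{U\}=\Lclass^{mn}-\{M\}$ only controls this alternating sum for $U$, and vanishing of an alternating sum does not force vanishing of the corresponding sum of $G$-invariant subspaces: cancellation between degrees could hide nonzero terms whose invariant parts survive with unequal dimensions. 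What you actually need is the degree-wise statement that $\operatorname{gr}^W_{2mn-i}H^j_c(U;\rat)=0$ for \emph{every} $j$ when $0<i<2k$, and this cannot be extracted from the Grothendieck group; it requires the long exact sequence of compactly supported cohomology for the decomposition of $V^m$ into $M$ and $U$. That sequence does give it: for $j<2mn$ the group $H^j_c(U;\rat)$ is a quotient of $H^{j-1}_c(M;\rat)$, whose weights are at most $\min(j-1,\,2(mn-k))<2mn-i$, while $H^{2mn}_c(U;\rat)\cong H^{2mn}_c(V^m;\rat)$ is pure of weight $2mn\neq 2mn-i$ for $i>0$. With this degree-wise vanishing, the $G$-invariant pieces vanish as well, the transfer step applies, and $c_i=0$ follows. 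So your plan is repairable, but the step you dismissed as bookkeeping is precisely where an honest cohomological input (long exact sequence plus weight bounds) must replace the $K$-theoretic computation.
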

\begin{proof}
  We refer to point \textbf{e)} of Theorem 5.1 in \cite{Ekedahl-inv}.
\end{proof}

\begin{cor}
  The second Ekedahl invariant is exactly $\eke{2}{G}=\{B_0(G)^{\vee}\}$.
\end{cor}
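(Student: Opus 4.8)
The plan is to combine item \textbf{d)} of Theorem \ref{thm-ekedahl} with the theorem stated immediately above this corollary, exploiting the explicit structure of the target group $\Lo{\Ab}$. By Theorem \ref{thm-ekedahl}.\textbf{d)} we already know that $\eke{2}{G}=\{B_0(G)^{\vee}\}+\alpha\{\inte\}$ for some integer $\alpha$, so the entire task reduces to showing $\alpha=0$.

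First I would isolate the relevant invariant of $\Lo{\Ab}$. Every finitely generated abelian group $A$ splits as $\inte^r$ direct sum a finite group, and the free rank $r$ is additive under direct sums; since the only relation defining $\Lo{\Ab}$ is $\{A\oplus B\}=\{A\}+\{B\}$, assigning to $\{A\}$ its free rank descends to a well-defined group homomorphism $r\colon\Lo{\Ab}\rightarrow\inte$. (Via the structure theorem this is literally the coefficient of the generator $\{\inte\}$ in the free abelian group $\Lo{\Ab}$ on the indecomposables $\inte$ and $\nicefrac{\inte}{p^n\inte}$.) The crucial feature is that $r$ annihilates the class of every \emph{finite} abelian group, while $r(\{\inte\})=1$.

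Next I would apply $r$ to both sides of the identity above. The Bogomolov multiplier $B_0(G)$ is always finite (as already used in the proof of Corollary \ref{prop-NP}), hence its Pontryagin dual $B_0(G)^{\vee}$ is finite and $r(\{B_0(G)^{\vee}\})=0$. On the other hand, the theorem just above this corollary asserts that for $i>0$ the invariant $\eke{i}{G}$ is a sum, with signs, of classes of finite groups; taking $i=2$ and using additivity of $r$ gives $r(\eke{2}{G})=0$. Applying $r$ to $\eke{2}{G}=\{B_0(G)^{\vee}\}+\alpha\{\inte\}$ then yields $0=0+\alpha$, so $\alpha=0$ and therefore $\eke{2}{G}=\{B_0(G)^{\vee}\}$.

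The argument is not computational, and the only point requiring care is the construction of the rank homomorphism $r$: one must check that the free rank is genuinely additive and hence passes to the quotient defining $\Lo{\Ab}$ (equivalently, that the decomposition of a finitely generated abelian group into indecomposables is essentially unique). Once that structural fact is in place, the conclusion is a one-line comparison of the $\{\inte\}$-coordinate, so I expect no substantive obstacle beyond correctly invoking the finiteness of $B_0(G)$ and the "finite groups only'' theorem for $i>0$.
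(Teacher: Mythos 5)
Your proposal is correct and follows essentially the same route as the paper: the paper's proof likewise combines Theorem \ref{thm-ekedahl}.\textbf{d)} with the preceding theorem (that $\eke{i}{G}$ for $i>0$ is a signed sum of classes of \emph{finite} groups) to force $\alpha=0$. Your construction of the rank homomorphism $r\colon\Lo{\Ab}\rightarrow\inte$ merely makes explicit the step the paper leaves implicit in the phrase ``using the previous theorem one gets $\alpha=0$,'' which is a welcome clarification but not a different argument.
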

\begin{proof}
  We already proved in Theorem \ref{thm-ekedahl}.\textbf{d)} that $\eke{2}{G}=\{B_0(G)^{\vee}\}+\alpha\{\inte\}$ for some integer $\alpha$.
  Using the previous theorem one gets $\alpha=0$.
\end{proof}

To the author’s knowledge, there are no examples in literature of finite group $G$ such that $B_0(G)=0$ and $\eke{3}{G}\neq 0$. 
Vice versa a lot of groups have trivial Ekedahl invariants.
%
% One way to study this is focusing on the class $\cl$.
% %
% Indeed if $\cl=1$ then the $\eke{i}{G}$ are trivial.
%
\begin{teo}[State of art for finite groups]\label{pro-BG=1}
  Assume one of the following cases: 
  \begin{description}
    \item[1)] if $G$ is the symmetric group and for every field $\field$;
    \item[2)] if $G \subset GL_1$ and for every field $\field$ (in particular, if $G$ is a cyclic group); 
    \item[3)] if $G$ is a unipotent finite group and for every field $\field$;
    \item[4)] if $G$ is a finite subgroup of the group of affine transformations of $\mathbb{A}^1_{\field}$ and for every algebraically closed field $\field$;
    \item[5)] if $G \subset GL_3(\com)$ and for $\field=\com$.
  \end{description}
  
  \noindent
  Then $\{\B{G}\}=\{*\}\in \grotcom{\Var{\field}}$ and the trivial Ekedahl invariants are trivial.
\end{teo}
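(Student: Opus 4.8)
The plan is to reduce the entire statement to the single equality $\cl=\{*\}$ in $\grotcom{\Var{\field}}$. Granting this, the triviality of the Ekedahl invariants is automatic: the continuous homomorphism $\operatorname{H}^k$ of Theorem \ref{thm-cohomological-map-stack} sends the class of a point, $\{*\}=\Lclass^0$, to $\{\operatorname{H}^k(*;\inte)\}$, which is $\{\inte\}$ for $k=0$ and $0$ otherwise, since a point has integral cohomology concentrated in degree zero. Hence $\eke{i}{G}=\Homred{-i}{\cl}=\Homred{-i}{\{*\}}=0$ for every $i\neq 0$. So in each of the five cases the real task is to establish $\cl=\{*\}$.

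For cases \textbf{2)}, \textbf{3)} and \textbf{4)} I would exhibit a special group $W$ containing $G$ as a closed subgroup and chosen so that the quotient $\nicefrac{W}{G}$ has the same class as $W$ itself; Lemma \ref{lem-group-basic} then gives $\cl=\{\nicefrac{W}{G}\}\{\B{W}\}=\nicefrac{\{\nicefrac{W}{G}\}}{\{W\}}=\{*\}$. In case \textbf{2)} take $W=\mathbb{G}_m=\operatorname{GL}_1$: a finite subgroup is a group $\mu_d$ of roots of unity, the quotient is realized by the $d$-th power isogeny $\mathbb{G}_m\to\mathbb{G}_m$, so $\nicefrac{\mathbb{G}_m}{\mu_d}\cong\mathbb{G}_m$ and $\{\nicefrac{W}{G}\}=\{W\}$ at once. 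In case \textbf{3)} take $W=U_N$, the group of upper unitriangular $N\times N$ matrices, which is special because it is an iterated extension of copies of $\mathbb{G}_a$; the content is then that $\nicefrac{U_N}{G}$ has the class of an affine space, which I would prove by induction along a composition series of the finite unipotent group $G$, using at each step the Artin--Schreier sequence $0\to\inte/p\to\mathbb{G}_a\xrightarrow{F-1}\mathbb{G}_a\to0$ and the resulting isomorphism $\nicefrac{\mathbb{G}_a}{(\inte/p)}\cong\mathbb{G}_a$. In case \textbf{4)} take $W=\mathbb{G}_a\rtimes\mathbb{G}_m$, the special group of affine transformations of $\mathbb{A}^1_{\field}$; over an algebraically closed field of characteristic zero the translation part of $G$ vanishes and, after conjugating to a fixed point, the problem collapses to case \textbf{2)}, while in positive characteristic the quotient $\nicefrac{W}{G}$ is handled by combining the arguments of \textbf{2)} and \textbf{3)}.

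Case \textbf{1)} I would treat instead through Proposition \ref{prop-property-BG}.\textbf{b)}, taking $V=\field^n$ to be the permutation representation of $S_n$. Viewing $V^m$ as the space of $n\times m$ matrices on which $S_n$ permutes rows identifies the quotient with the symmetric product, $\nicefrac{V^m}{S_n}\cong\operatorname{Sym}^n(\mathbb{A}^m_{\field})$. The engine of the argument is the motivic identity $\{\operatorname{Sym}^n(\mathbb{A}^m_{\field})\}=\Lclass^{mn}$, which follows from the motivic zeta function $Z(\mathbb{A}^m_{\field},t)=(1-\Lclass^m t)^{-1}$; consequently $\{\nicefrac{V^m}{S_n}\}\Lclass^{-mn}=\{*\}$ for every $m$, and passing to the limit in $\grotcom{\Var{\field}}$ yields $\cl=\{*\}$.

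The hard part will be case \textbf{5)}. Here Proposition \ref{prop-property-BG}.\textbf{a)} gives $\cl=\nicefrac{\{\nicefrac{\Gl{3}{\com}}{G}\}}{\{\Gl{3}{\com}\}}$, and one must prove $\{\nicefrac{\Gl{3}{\com}}{G}\}=\{\Gl{3}{\com}\}$, equivalently that $\{\nicefrac{V^m}{G}\}$ stabilizes to $\Lclass^{3m}$. Unlike the earlier cases there is no ambient special group with a manifestly affine-space quotient, so this genuinely needs the birational geometry of three-dimensional quotient singularities: I would invoke the classification of finite subgroups of $\Gl{3}{\com}$ together with the rationality of the quotients $\nicefrac{\com^3}{G}$ (there being no counterexample to the Noether problem in dimension three) and then upgrade birationality to equality of classes in the value ring. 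I expect precisely this last upgrade --- from a birational isomorphism to an honest identity of classes in $\grotcom{\Var{\com}}$ --- to be the most delicate step, and it is where I would rely most heavily on Ekedahl's original computations.
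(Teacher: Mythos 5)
Your opening reduction (Ekedahl invariants are trivial once $\cl=\{*\}$, by continuity of $\operatorname{H}^k$ and discreteness of $\Lo{\Ab}$) is correct. Be aware that the paper itself does not argue any of the five cases: its proof is a pointer to Theorem 4.3, Proposition 3.2, Corollary 3.9 and Example \textit{ii)} of \cite{Ekedahl-inv} and to Theorem 2.5 of \cite{Martino-TheEkedahl}. Your cases \textbf{1)}--\textbf{4)} are genuine reconstructions of those cited arguments and are essentially right: case \textbf{2)} via Lemma \ref{lem-group-basic} with $W=\operatorname{GL}_1$ and the $d$-th power isogeny is exactly the mechanism of the cited Proposition 3.2; case \textbf{1)} via $\nicefrac{V^m}{S_n}\cong\operatorname{Sym}^n(\mathbb{A}^m_{\field})$ and $\{\operatorname{Sym}^n(\mathbb{A}^m_{\field})\}=\Lclass^{mn}$ (Totaro's lemma), fed into Proposition \ref{prop-property-BG}.\textbf{b)}, is the method of the cited Theorem 4.3. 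In case \textbf{3)} note that the statement is vacuous in characteristic zero, and that your induction along a composition series is glossed at the extension step: knowing $\nicefrac{\mathbb{G}_a}{(\inte/p)}\cong\mathbb{G}_a$ for each graded piece does not by itself descend through a filtration of $G$; one must argue that at each stage the relevant $\mathbb{G}_a$-quotient torsors are trivial. This is fixable and is how the cited Corollary 3.9 proceeds, but as written it is a sketch, not a proof.

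Case \textbf{5)} contains a genuine gap, and it is not the "delicate step" you describe but an obstruction of principle. You propose to deduce $\cl=\{*\}$ from the rationality of $\nicefrac{\com^3}{G}$ by "upgrading birationality to equality of classes" in $\grotcom{\Var{\com}}$. No such upgrade exists: birational (even rational) varieties do not have equal classes in $\grot{\Var{\com}}$, and the discrepancy terms for $\nicefrac{V^m}{G}$ live over the singular locus, in codimension as low as $2$ independently of $m$, so they do not die in the limit $\lim_{m\rightarrow\infty}\{\nicefrac{V^m}{G}\}\Lclass^{-mn}$ the way the codimension-$\lfloor\nicefrac{m}{n}\rfloor$ terms in the proof of Proposition \ref{prop-property-BG} do. Indeed, by the no-name lemma rationality of $\nicefrac{V}{G}$ already forces rationality of every $\nicefrac{V^m}{G}$, so if your implication "rational quotient $\Rightarrow\cl=\{*\}$" were valid, it would resolve affirmatively the open question stated in this very paper (whether $\cl\neq\{*\}$ obstructs rationality of $\field(V)^G/\field$), and would also make cases \textbf{1)}--\textbf{4)} redundant in characteristic zero. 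The actual proof of the $\Gl{3}{\com}$ case in Theorem 2.5 of \cite{Martino-TheEkedahl} does not pass through rationality at all: it relies on Ekedahl's criterion controlling $\{\nicefrac{V^m}{G}\}$ through resolutions whose fibers have classes that are polynomials in $\Lclass$ with constant term $1$, a property available in low dimension via the explicit geometry of quotient singularities (McKay-type resolutions), which is precisely the kind of input your sketch leaves unspecified.
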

\begin{proof}
  See in order Theorem 4.3, Proposition 3.2, Corollary 3.9, Example \textit{ii)} at page 8 in \cite{Ekedahl-inv}, and Theorem 2.5 in \cite{Martino-TheEkedahl}.
\end{proof}

\noindent
Recently the author has proved also the following facts:
\begin{teo}[Thm 3.1 in \cite{Martino-TheEkedahl}]
  Let $G$ be a finite subgroup of $\Gl{n}{\com}$ and let $H$ be the image of $G$ under the canonical projection $\Gl{n}{\com}\rightarrow \operatorname{PGL}_n(\com)$.
  
  \noindent
  If $H$ is abelian and if the quotient $\nicefrac{\mathbb{P}^{n-1}_{\com}}{H}$ has only zero dimensional singularities, then for every integer $k$
  \[
    \eke{k}{G}+\eke{k+2}{G}+\dots+\eke{k+2(n-1)}{G}=\{\Hom{-k}{X}{\inte}\},
    %\Homred{k}{\{\B{G}\}(1+\Lclass+\dots+\Lclass^{n-1})}=\Homred{k}{\{\nicefrac{\mathbb{P}^{n-1}}{H}\}}.
  \]
  where $X$ is a smooth and proper resolution of $\nicefrac{\mathbb{P}^{n-1}_{\com}}{H}$.
  %Moreover $\{\Hom{2j}{X}{\inte}\}={\inte}$ for $0\leq j\leq n-1$.
\end{teo}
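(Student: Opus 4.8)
The plan is to run the projective-bundle identity of Lemma~\ref{lem-formula-BG} through the cohomological map and to recognise the outcome as the cohomology of the resolution $X$.

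First I would specialise formula (\ref{eq-P(V)/G}) to the standard faithful representation $V=\com^{n}$, so that $G\subseteq\Gl{n}{\com}=GL(V)$ and $\mathbb{P}(V)=\mathbb{P}^{n-1}_{\com}$, obtaining $\{[\mathbb{P}(V)/G]\}=(1+\Lclass+\dots+\Lclass^{n-1})\cl$. I then apply $\operatorname{H}^{-k}$. By Theorem~\ref{thm-cohomological-map-stack} this is a group homomorphism, and the defining rule $\operatorname{H}^{m}(\nicefrac{\{Y\}}{\Lclass^{a}})=\{\Hom{m+2a}{Y}{\inte}\}$ shows that multiplication by $\Lclass$ shifts the index, $\operatorname{H}^{m}(\Lclass^{j}z)=\operatorname{H}^{m-2j}(z)$. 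Hence
\[
  \operatorname{H}^{-k}\bigl(\{[\mathbb{P}(V)/G]\}\bigr)=\sum_{j=0}^{n-1}\operatorname{H}^{-k}\bigl(\Lclass^{j}\cl\bigr)=\sum_{j=0}^{n-1}\operatorname{H}^{-k-2j}(\cl)=\sum_{j=0}^{n-1}\eke{k+2j}{G},
\]
which is the left-hand side of the theorem. Since $\operatorname{H}^{-k}(\{X\})=\{\Hom{-k}{X}{\inte}\}$ for the smooth and proper variety $X$, everything reduces to the single identity $\operatorname{H}^{-k}(\{[\mathbb{P}(V)/G]\})=\operatorname{H}^{-k}(\{X\})$, and I would prove it by comparing $\{[\mathbb{P}(V)/G]\}$ with $\{X\}$ in $\grotcom{\Var{\field}}$.

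The $G$-action on $\mathbb{P}(V)$ factors through $H$, so the coarse quotient is $\nicefrac{\mathbb{P}(V)}{G}=\nicefrac{\mathbb{P}^{n-1}_{\com}}{H}$, of which $X$ is a resolution, an isomorphism over the smooth locus. I would stratify $[\mathbb{P}(V)/G]$ by the stabiliser and argue that, away from the singular points of the quotient, the stacky structure is invisible to $\operatorname{H}^{-k}$. Indeed, the morphism $[\mathbb{P}(V)/G]\rightarrow[\mathbb{P}(V)/H]$ is a gerbe banded by the central kernel $Z=\ker(G\rightarrow H)=G\cap\com^{*}$, a finite cyclic group, for which $\{\B{Z}\}=\{*\}$ by item~\textbf{2)} of Theorem~\ref{pro-BG=1}; and on $[\mathbb{P}(V)/H]$ every stabiliser is a finite abelian group $\Gamma\subseteq H$, for which $\{\B{\Gamma}\}=\{*\}$ as well, this last fact following from the cyclic case together with the multiplicativity $\{\B{(\Gamma_{1}\times\Gamma_{2})}\}=\{\B{\Gamma_{1}}\}\{\B{\Gamma_{2}}\}$ of the ring $\grot{\Stck{\field}}$ and the decomposition of a finite abelian group into cyclic factors. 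Granting that these classifying-stack classes control the gerbes in the Grothendieck ring, the contribution of the smooth locus to $\{[\mathbb{P}(V)/G]\}$ is exactly the class of the coarse quotient there, which coincides with the corresponding contribution of $X$.

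The remaining, and genuinely hard, point is to match the contributions over the finitely many singular points. Near such a point the quotient is analytically isomorphic to $\nicefrac{\com^{n-1}}{\Gamma}$ for a finite abelian $\Gamma$ acting without pseudo-reflections; the stack is the smooth Deligne--Mumford model $[\nicefrac{\com^{n-1}}{\widetilde{\Gamma}}]$ lying over it, while $X$ replaces the point by the exceptional locus of a resolution of $\nicefrac{\com^{n-1}}{\Gamma}$. What must be established is the local comparison $\operatorname{H}^{-k}(\{[\nicefrac{\com^{n-1}}{\widetilde{\Gamma}}]\})=\operatorname{H}^{-k}(\{\text{exceptional locus}\})$, relative to the singular point, i.e.\ a motivic McKay-type correspondence for abelian quotient singularities read at the integral level of $\Lo{\Ab}$. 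This is exactly where the two hypotheses are used: abelianness makes each local stabiliser a product of cyclic groups, so the singularity is toric and the comparison can be made explicit, while the zero-dimensionality of the singular set ensures that these corrections sit over isolated points and can be treated one at a time, with no interference between them. The delicate feature is that $\operatorname{H}^{-k}$ records torsion in $\Lo{\Ab}$, so this matching must be carried out at the level of classes rather than of Betti numbers; once it is in hand, summing the free, pseudo-reflection and isolated-singular strata gives $\operatorname{H}^{-k}(\{[\mathbb{P}(V)/G]\})=\operatorname{H}^{-k}(\{X\})$, and combining with the first step yields the theorem for every integer $k$.
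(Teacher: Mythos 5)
Your opening reduction is correct and is the right first move: applying $\operatorname{H}^{-k}$ to formula (\ref{eq-P(V)/G}) and using the shift $\operatorname{H}^{m}(\Lclass^{j}z)=\operatorname{H}^{m-2j}(z)$ together with Definition \ref{defi-ekedahl-invariants-stacky} does identify $\eke{k}{G}+\eke{k+2}{G}+\dots+\eke{k+2(n-1)}{G}$ with $\operatorname{H}^{-k}(\{[\mathbb{P}(V)/G]\})$, reducing the theorem to the single identity $\operatorname{H}^{-k}(\{[\mathbb{P}(V)/G]\})=\{\Hom{-k}{X}{\inte}\}$. Past this point, however, what you give is a plan rather than a proof, and both steps you hedge on are genuine gaps. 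First, $\{\B{Z}\}=\{*\}$ and $\{\B{\Gamma}\}=\{*\}$ do not, by themselves, ``control the gerbes'': a nontrivial $Z$-gerbe $\mathcal{G}\rightarrow S$ is not isomorphic to $S\times \B{Z}$, and the defining relations of $\grot{\Stck{\field}}$ (scissor and vector-bundle relations) give no mechanism to pass from triviality of $\{\B{Z}\}$ to $\{\mathcal{G}\}=\{S\}$; the paper itself stresses that multiplicativity along torsors and fibrations is a privilege of special groups, and finite groups are not special. This particular gap is repairable by the same device that proves Lemma \ref{lem-formula-BG}: over the open locus $U\subseteq \mathbb{P}(V)$ where $H$ acts freely, $G$ acts freely on the punctured cone $\widetilde{U}\subseteq V\setminus\{0\}$, the stack $[\nicefrac{U}{G}]$ is the quotient of the \emph{variety} $\nicefrac{\widetilde{U}}{G}$ by $\mathbb{G}_m$, and comparing the $\mathbb{G}_m$-torsor $\nicefrac{\widetilde{U}}{G}\rightarrow [\nicefrac{U}{G}]$ with the $\nicefrac{\mathbb{G}_m}{Z}$-torsor $\nicefrac{\widetilde{U}}{G}\rightarrow \nicefrac{U}{H}$ (both groups are special) yields $(\Lclass-1)\{[\nicefrac{U}{G}]\}=\{\nicefrac{\widetilde{U}}{G}\}=(\Lclass-1)\{\nicefrac{U}{H}\}$, hence $\{[\nicefrac{U}{G}]\}=\{\nicefrac{U}{H}\}$; but some argument of this kind must actually be made, and you make none.

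The decisive gap is the one you yourself call genuinely hard: the matching over the singular points, which is where both hypotheses enter, is not proved but postulated (``once it is in hand''). An $\Lo{\Ab}$-valued, torsion-sensitive McKay-type correspondence for abelian quotient singularities is not an available black box one may cite; it is essentially the content of the theorem. Worse, in the resolution-independent form your plan requires, it is false: if $X$ is a smooth and proper resolution of $\nicefrac{\mathbb{P}^{n-1}_{\com}}{H}$ and $X'$ is the blow-up of $X$ at a point of its exceptional locus, then $X'$ is again a smooth and proper resolution, yet $\{\Hom{2}{X'}{\inte}\}=\{\Hom{2}{X}{\inte}\}+\{\inte\}$. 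Hence no local comparison that treats ``the exceptional locus of a resolution'' uniformly over all resolutions can hold; a correct proof has to construct a specific resolution of each isolated abelian quotient singularity and match its cohomology classes against the stack explicitly, which is exactly the technical work of \cite{Martino-TheEkedahl} that your proposal leaves out. As it stands, the proposal establishes the first reduction and nothing more.
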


\begin{teo}[Thm 4.4 in \cite{Martino-TheEkedahl}]
  The Ekedahl invariants of the fifth discrete Heisenberg group, $\eke{i}{\hei{5}}$, are trivial.
\end{teo}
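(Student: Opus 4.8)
The plan is to realize $\hei{5}$ inside a projective linear group so that Theorem 3.1 of \cite{Martino-TheEkedahl} applies, and then to feed the cohomology of the resulting projective quotient into the linear relations that theorem produces among the invariants $\eke{i}{\hei{5}}$. First I would fix the faithful $5$-dimensional irreducible representation $V$ of the Heisenberg group $\hei{5}$ of order $5^3$ (the Schr\"odinger representation), so that $n=\dim V=5$ and $\hei{5}\subseteq \Gl{5}{\com}$. By Schur's lemma the centre $Z(\hei{5})\cong\FIELD$ acts by scalars, hence the image $H$ of $\hei{5}$ in $\operatorname{PGL}_5(\com)$ is the abelian quotient $\hei{5}/Z(\hei{5})\cong \FIELD\oplus\FIELD$. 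Next I would verify the remaining hypothesis: every non-trivial element of $H$ lifts to an element of $\hei{5}$ acting on $V$ with five distinct eigenvalues (a non-central element is, up to the centre, conjugate to a power of one of the two standard generators, both of which have simple spectrum), so its fixed locus in $\mathbb{P}^4_{\com}$ is finite. Hence the non-free locus of the $H$-action is finite and $\nicefrac{\mathbb{P}^4_{\com}}{H}$ has only isolated singularities, concretely the six cyclic quotient singularities coming from the six order-$5$ subgroups of $H$.

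With both hypotheses in hand, Theorem 3.1 of \cite{Martino-TheEkedahl} (itself obtained by applying $\operatorname{H}^{*}$ to formula (\ref{eq-P(V)/G}) in Lemma \ref{lem-formula-BG}) gives, for every integer $k$,
\[
  \eke{k}{\hei{5}}+\eke{k+2}{\hei{5}}+\eke{k+4}{\hei{5}}+\eke{k+6}{\hei{5}}+\eke{k+8}{\hei{5}}=\Homred{-k}{\{\nicefrac{\mathbb{P}^4_{\com}}{H}\}},
\]
where the right-hand side is the Ekedahl cohomology of the \emph{class} of the quotient (computed through the Bittner presentation used in Definition \ref{defi-ekedahl-invariants-non-stacky}). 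Writing $C_d$ for this right-hand side with $d=-k$, the whole problem reduces to computing the groups $C_d$.

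The hard part will be computing $C_d$, and here is where I expect the main obstacle. My claim is $C_d=\{\inte\}$ for $d\in\{0,2,4,6,8\}$ and $C_d=0$ otherwise, i.e.\ that the six quotient singularities contribute nothing and the class behaves cohomologically like $\mathbb{P}^4_{\com}$. The point I would exploit is that $\nicefrac{\mathbb{P}^4_{\com}}{H}$, being a finite quotient of a smooth projective variety, is a rational homology manifold: its rational cohomology is the invariant part $\Homred{*}{\mathbb{P}^4_{\com}}^{H}=\Homred{*}{\mathbb{P}^4_{\com}}$ (the $H$-action on $\Homred{*}{\mathbb{P}^4_{\com}}$ is trivial, since $H$ fixes the hyperplane class), it is pure, and it satisfies Poincar\'e duality, so its virtual (Ekedahl) cohomology is one free copy $\{\inte\}$ in each even degree $0,\dots,8$ and nothing in odd degrees. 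The genuinely delicate issue is that a single smooth resolution would add classes in $\Homred{4}{}$ from the exceptional fibres, so I must argue at the level of the class in $\grot{\Var{\field}}$ that these extra contributions cancel and that no torsion survives in the relevant degrees; I would do this by a direct stratified computation of $\{\nicefrac{\mathbb{P}^4_{\com}}{H}\}$ (free locus together with the six isolated orbits) rather than by resolving.

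Finally I would run the bootstrap. Recall from Theorem \ref{thm-ekedahl} that $\eke{i}{\hei{5}}=0$ for $i<0$, and that $\eke{i}{\hei{5}}=0$ for $i\gg0$ since $\operatorname{H}^{*}$ lands in $\Lo{\Ab}((t))$. Rewriting the relation as $\eke{k}{\hei{5}}=C_{-k}-\bigl(\eke{k+2}{\hei{5}}+\dots+\eke{k+8}{\hei{5}}\bigr)$ and inducting downward on $k$: for $k\ge1$ one has $C_{-k}=0$ and, by induction, the four summands vanish, so $\eke{k}{\hei{5}}=0$; for $k=0$ the relation gives $\eke{0}{\hei{5}}=C_0=\{\inte\}$; and for $k<0$ the invariant already vanishes. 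This yields exactly $\eke{0}{\hei{5}}=\{\inte\}$ and $\eke{i}{\hei{5}}=0$ for $i\ne0$, i.e.\ the Ekedahl invariants of $\hei{5}$ are trivial. As a consistency check, the instance $k=-6$ recovers $\eke{2}{\hei{5}}=0$, in agreement with $B_0(\hei{5})=0$ (every group of order $p^3$ has trivial Bogomolov multiplier) and Theorem \ref{thm-ekedahl}\textbf{d)}.
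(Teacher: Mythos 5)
Your skeleton is in fact the skeleton of the real proof: the present paper does not prove this theorem at all (it only cites Theorem 4.4 of \cite{Martino-TheEkedahl}), and the proof there runs exactly as you set up --- Schr\"odinger representation, $H\cong\FIELD\oplus\FIELD$ acting on $\mathbb{P}^4_{\com}$ with six isolated cyclic quotient singularities, Theorem 3.1 of \cite{Martino-TheEkedahl}, then linear algebra in $\Lo{\Ab}$. The genuine gap is the step you yourself flag as delicate, and it is precisely the hard core of the cited paper. The map $\operatorname{H}^k$ is defined on $\grot{\Var{\com}}$ only through Bittner presentations by smooth \emph{proper} varieties, so its value on the class of a singular variety is not the singular (or intersection) cohomology of that variety; your rational-homology-manifold/purity/Poincar\'e-duality argument computes $\Homred{*}{\nicefrac{\mathbb{P}^4_{\com}}{H}}$, which is the wrong object. (Cheap example: for the nodal plane cubic $N$ one has $\{N\}=\Lclass$ in $\grot{\Var{\com}}$, so $\Homred{0}{\{N\}}=0$ while $\operatorname{H}^0(N;\inte)=\inte$.) Nor can the proposed ``direct stratified computation'' rescue this: the stratification $\{\nicefrac{\mathbb{P}^4_{\com}}{H}\}=\{\nicefrac{U}{H}\}+6\{*\}$ has a non-proper main stratum, and to evaluate $\operatorname{H}^k$ on $\{\nicefrac{U}{H}\}$ you must again express it through smooth proper varieties, i.e.\ compactify and resolve --- exactly the work you are trying to avoid; moreover $\Lo{\Ab}$ retains torsion, which no rational argument controls. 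In \cite{Martino-TheEkedahl} this step is carried out by resolving the six singularities torically, computing the integral cohomology of the resolution together with the classes of the exceptional fibres, and checking the cancellation; that is the technical Theorem 4.7 there, and the present paper explicitly identifies it as the reason the result is only known for $p=5$. Your proposal, in effect, assumes this step.

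Two further points. First, the base of your downward induction, $\eke{i}{\hei{5}}=0$ for $i\gg 0$, does not follow from Theorem \ref{thm-cohomological-map-stack}: the dimension filtration bounds the degrees of nonzero cohomology from \emph{above}, not below, and elements of $\grotcom{\Var{\field}}$ can have nonzero cohomology in infinitely many negative degrees (e.g.\ $\sum_{m\geq 1}\Lclass^{-m}$ converges and has $\Homred{-2m}{\cdot}=\{\inte\}$ for all $m$); no such vanishing is known a priori for $\Cl$. This gap is repairable: run the induction \emph{upward}, starting from $\eke{i}{G}=0$ for $i<0$ and items \textbf{b)}, \textbf{c)} of Theorem \ref{thm-ekedahl}, reading the relation at $k=-8,-7,-6,\dots$; each successive $k$ isolates exactly one new invariant, and with your claimed values of $C_d$ this yields triviality. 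Second, note that the right-hand side of Theorem 3.1 must indeed be interpreted as the cohomology of the \emph{class} of the quotient, as you do, and not literally as $\{\Hom{-k}{X}{\inte}\}$ for a single resolution $X$ as quoted in this paper: quotient singularities are $\mathbb{Q}$-factorial, so any resolution has exceptional divisors, hence $\Hom{6}{X}{\inte}$ has rank greater than one and the literal reading would force $\eke{2}{\hei{5}}\neq 0$. This observation confirms that what must be computed is the class $\{\nicefrac{\mathbb{P}^4_{\com}}{H}\}$ in $\grot{\Var{\com}}$ itself --- not the topology of the quotient space --- and that computation is the missing core of your argument.
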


We observe that, in \cite{Martino-TheEkedahl}, the author approaches the the study of the Ekedahl invariants of $\hei{p}$ for a general $p$, but the assumption $p=5$ is necessary because of the difficulties to extend the technical result in Theorem 4.7 of \cite{Martino-TheEkedahl}.

\noindent
Moreover, in Theorem 1.9 of \cite{Kang-Rationality}, Kang has proved that the Noether problem for the Heisenberg group $\hei{p}$ has positive answer.
Then, it is natural to conjecture that:
\begin{conj}
  The Ekedahl invariants of the $p$-discrete Heisenberg group $\hei{p}$ are trivial.
\end{conj}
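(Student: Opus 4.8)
The plan is to apply the author's own sum formula (Theorem 3.1 of \cite{Martino-TheEkedahl}) to the canonical faithful representation of $\hei{p}$ and then to reduce the conjecture to a single integral cohomology computation. First I would take $V$ to be the $p$-dimensional Schr\"odinger representation of $\hei{p}$, so that $n=\dim V=p$. The center $\centro\cong\Zp$ acts on $V$ by scalars, hence the induced action on $\mathbb{P}(V)=\mathbb{P}^{p-1}$ factors through $H:=\nicefrac{\hei{p}}{\centro}\cong\Zp\times\Zp$, which is abelian. Provided the quotient $\nicefrac{\mathbb{P}^{p-1}}{H}$ has only zero-dimensional singularities, Theorem 3.1 of \cite{Martino-TheEkedahl} then applies and yields, for every integer $k$,
\[
  \eke{k}{\hei{p}}+\eke{k+2}{\hei{p}}+\dots+\eke{k+2(p-1)}{\hei{p}}=\{\Homred{-k}{X}\},
\]
where $X$ is a smooth and proper resolution of $\nicefrac{\mathbb{P}^{p-1}}{H}$.

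I would begin by verifying the hypothesis. A point of $\mathbb{P}^{p-1}$ maps to a singular point of the quotient only if its line is fixed by a nontrivial element of $H$; since $H$ acts through the Weyl--Heisenberg \emph{clock and shift} matrices, each nontrivial element has distinct eigenvalues, so its fixed locus in $\mathbb{P}^{p-1}$ is a finite set of points. I would show that the union over all nontrivial elements is still finite, so that $\nicefrac{\mathbb{P}^{p-1}}{H}$ has only isolated cyclic quotient singularities, each locally of the form $\com^{p-1}/\Zp$. This is precisely the statement that Theorem 4.7 of \cite{Martino-TheEkedahl} settles for $p=5$, and establishing it uniformly in $p$ is the first obstacle.

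Granting this, the conjecture becomes a statement about $X$ alone. Because $\{\Homred{-k}{X}\}=0$ for $k\ge 1$, subtracting the formula at $k$ and at $k+2$ gives the telescoping relation $\eke{k}{\hei{p}}=\eke{k+2p}{\hei{p}}$ for all $k\ge 1$, so the positive-degree invariants are $2p$-periodic and it suffices to treat $1\le i\le 2p$. For $k\le 0$, writing $j=-k$ and using $\eke{i}{\hei{p}}=0$ for $i<0$ (Theorem \ref{thm-ekedahl}), the formula collapses to the two chains
\[
  \eke{0}{\hei{p}}+\dots+\eke{2s}{\hei{p}}=\{\Hom{2p-2-2s}{X}{\inte}\},\qquad
  \eke{1}{\hei{p}}+\dots+\eke{2s+1}{\hei{p}}=\{\Hom{2p-3-2s}{X}{\inte}\}.
\]
I would feed in the known low-degree values $\eke{0}{\hei{p}}=\{\inte\}$ and $\eke{1}{\hei{p}}=0$ (Theorem \ref{thm-ekedahl}) together with $\eke{2}{\hei{p}}=\{B_0(\hei{p})^{\vee}\}=0$: the Bogomolov multiplier vanishes because Kang proved in \cite{Kang-Rationality} that the Noether problem for $\hei{p}$ is affirmative, whence $B_0(\hei{p})=0$ by the Artin--Mumford obstruction recalled in Section \ref{sec-Preliminaries}. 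Taking successive differences of the chains expresses each invariant as $\eke{2s}{\hei{p}}=\{\Hom{2p-2-2s}{X}{\inte}\}-\{\Hom{2p-2s}{X}{\inte}\}$ and $\eke{2s+1}{\hei{p}}=\{\Hom{2p-3-2s}{X}{\inte}\}-\{\Hom{2p-1-2s}{X}{\inte}\}$, while the two leftover invariants $\eke{2p-1}{\hei{p}}$ and $\eke{2p}{\hei{p}}$ are recovered from the vanishing sums at $k=1$ and $k=2$; periodicity then propagates triviality to all $i\ge 1$.

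Thus all invariants vanish \emph{precisely} when $\Hom{j}{X}{\inte}=0$ for $j$ odd and $\Hom{j}{X}{\inte}\cong\Hom{j+2}{X}{\inte}$ for every even $j\in[0,2p-2]$, i.e.\ exactly when the resolution $X$ carries the integral cohomology of $\mathbb{P}^{p-1}$ (torsion-free, concentrated in even degrees, each Betti number equal to $1$). The remaining step, and the genuinely hard one, is therefore a uniform computation of $\Hom{*}{X}{\inte}$: one must choose the resolution of the isolated quotient singularities of $\nicefrac{\mathbb{P}^{p-1}}{H}$ — I would attempt a toric or crepant resolution and exploit the McKay correspondence, using that $X$ is unirational and hence simply connected to anchor $\Hom{1}{X}{\inte}=0$ — and prove that it introduces \emph{no} new cohomology, neither extra free classes in even degrees (in particular the exceptional locus must contribute no $\Hom{2}{X}{\inte}$, forcing a small or cohomologically trivial resolution) nor any torsion, for \emph{every} prime $p$ at once. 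This cohomological rigidity is exactly the content of Theorem 4.7 of \cite{Martino-TheEkedahl} beyond $p=5$; I expect it to be the crux of the proof, the difficulty being that the toric combinatorics of the singularity links, tractable for $p=5$, must be controlled for general $p$.
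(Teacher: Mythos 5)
The statement you set out to prove is not a theorem of the paper at all: it is stated there as an open conjecture, with no proof given, and the paper explicitly locates the obstruction in extending the technical result of Theorem 4.7 of \cite{Martino-TheEkedahl} beyond $p=5$. Your proposal retraces the author's published strategy for the $p=5$ case --- the Schr\"odinger representation, the abelian image $H\cong\Zp\times\Zp$ in $\operatorname{PGL}_p(\com)$, the zero-dimensionality of the singular locus of $\nicefrac{\mathbb{P}^{p-1}}{H}$ (your clock-and-shift eigenvalue argument is fine, though this verification is not what Theorem 4.7 of \cite{Martino-TheEkedahl} is about), the sum formula of Theorem 3.1, and the telescoping against the known values $\eke{0}{\hei{p}}=\{\inte\}$, $\eke{1}{\hei{p}}=0$ and $\eke{2}{\hei{p}}=\{B_0(\hei{p})^{\vee}\}=0$ via Kang \cite{Kang-Rationality}. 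That bookkeeping is largely sound, but at the decisive step --- the uniform-in-$p$ computation of the integral cohomology of a resolution --- you stop and say you ``expect it to be the crux''. So nothing is proved: the proposal is a research plan that reduces the conjecture to exactly the open problem the paper itself names, and the conjecture remains open after your argument as before it.

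Moreover, the endgame as you formulate it would fail even if attempted. You conclude that triviality holds ``precisely when'' $X$ carries the integral cohomology of $\mathbb{P}^{p-1}$, and you yourself note this forces a small or cohomologically trivial resolution. No such resolution exists: $\nicefrac{\mathbb{P}^{p-1}}{H}$ is the quotient of a smooth projective variety by a finite group, hence normal with $\mathbb{Q}$-factorial (quotient) singularities, so every proper birational morphism from a smooth $X$ must contract a divisor; the exceptional divisor classes together with the pullback of an ample class are linearly independent in $\Hom{2}{X}{\rat}$, so $\operatorname{rank}\Hom{2}{X}{\inte}\geq 2$ and $X$ can never have the Betti numbers of projective space. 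If your equivalence were literally correct, it would refute the conjecture already at $p=5$, contradicting Theorem 4.4 of \cite{Martino-TheEkedahl}. The error is in reading the right-hand side of the sum formula as the cohomology of $X$ alone: as in Definition \ref{defi-ekedahl-invariants-non-stacky}, the class $\{\nicefrac{\mathbb{P}^{p-1}}{H}\}$ must be expanded by Bittner as $\{X\}+\sum_j n_j\{X_j\}$, and triviality of the invariants amounts to \emph{cancellation} in $\Lo{\Ab}$ between $\{\Hom{*}{X}{\inte}\}$ and the contributions of the lower-dimensional pieces $X_j$ (which absorb the exceptional cohomology), not to the absence of new classes on $X$. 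Your telescoping has to be redone tracking these correction terms; only in that form does the $p=5$ computation of Theorem 4.7 of \cite{Martino-TheEkedahl}, or any hoped-for extension to general $p$, fit the framework.
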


\textbf{Acknowledgements}\\
I thank Angelo Vistoli for the great mathematical support in this subject.

%============================================================
%-----------------------------
%\newpage
%\clearpage
\begin{scriptsize}
\addcontentsline{toc}{section}{Bibliography}
\bibliographystyle{siam}
\bibliography{IntroToeG}

\begin{thebibliography}{10}

\bibitem{ArtinMumford1972}
{\sc M.~Artin and D.~Mumford}, {\em Some elementary examples of unirational
  varieties which are not rational}, Proc. London Math. Soc. (3), 25 (1972),
  pp.~75--95.

\bibitem{Bittner2004}
{\sc F.~Bittner}, {\em The universal {E}uler characteristic for varieties of
  characteristic zero}, Compos. Math., 140 (2004), pp.~1011--1032.

\bibitem{Bogomolov1988}
{\sc F.~A. Bogomolov}, {\em The {B}rauer group of quotient spaces of linear
  representations}, Izv. Akad. Nauk SSSR Ser. Mat., 51 (1987), pp.~485--516,
  688.

\bibitem{Ekedahl-inv}
{\sc T.~Ekedahl}, {\em A geometric invariant of a finite group}.
\newblock arXiv:0903.3148v1, 2009.

\bibitem{EkedahlStack}
\leavevmode\vrule height 2pt depth -1.6pt width 23pt, {\em The grothendieck
  group of algebraic stacks}.
\newblock arXiv:0903.3143v2, 2009.

\bibitem{FultonIntersection}
{\sc W.~Fulton}, {\em Intersection theory}, vol.~2 of Ergebnisse der Mathematik
  und ihrer Grenzgebiete. 3. Folge. A Series of Modern Surveys in Mathematics
  [Results in Mathematics and Related Areas. 3rd Series. A Series of Modern
  Surveys in Mathematics], Springer-Verlag, Berlin, second~ed., 1998.

\bibitem{HatcherTopology}
{\sc A.~Hatcher}, {\em Algebraic topology}, Cambridge University Press,
  Cambridge, 2002.

\bibitem{HKM-noether}
{\sc A.~Hoshi, M.-C. Kang, and B.~E. Kunyavskii}, {\em Noether's problem and
  unramified {B}rauer groups}, Asian J. Math., 17 (2013), pp.~689--713.

\bibitem{Kang-Rationality}
{\sc M.-c. Kang}, {\em Retract rationality and {N}oether's problem}, Int. Math.
  Res. Not. IMRN,  (2009), pp.~2760--2788.

\bibitem{Manin-motifs}
{\sc J.~I. Manin}, {\em Correspondences, motifs and monoidal transformations},
  Mat. Sb. (N.S.), 77 (119) (1968), pp.~475--507.

\bibitem{Martino-TheEkedahl}
{\sc I.~Martino}, {\em The ekedahl invariants for finite groups}.
\newblock arXiv:1312.0476, 2013.

\bibitem{Noether1917}
{\sc E.~Noether}, {\em Gleichungen mit vorgeschriebener {G}ruppe}, Math. Ann.,
  78 (1917), pp.~221--229.

\bibitem{Saltman1984}
{\sc D.~J. Saltman}, {\em Noether's problem over an algebraically closed
  field}, Invent. Math., 77 (1984), pp.~71--84.

\bibitem{Serre1959}
{\sc J.-P. Serre}, {\em On the fundamental group of a unirational variety}, J.
  London Math. Soc., 34 (1959), pp.~481--484.

\bibitem{Swan1969}
{\sc R.~G. Swan}, {\em Invariant rational functions and a problem of
  {S}teenrod}, Invent. Math., 7 (1969), pp.~148--158.

\end{thebibliography}
\end{scriptsize}

\noindent
 {\scshape Ivan Martino}\\
 {\scshape Department of Mathematics, University of Fribourg,\\ 1700 Fribourg, Switzerland}.\\
 {\itshape E-mail address}: \texttt{ivan.martino@unifr.ch}
 
% \noindent
%  {\scshape Department of Mathematics, Stockholm University, SE-10691 Stockholm, Sweden}.\\
%  {\itshape E-mail address}: \texttt{martino@math.su.se}
\end{document}